\newcolumntype{M}[1]{>{\centering\arraybackslash}m{#1}} 
\definecolor{linkred}{rgb}{0.7,0.2,0.2}
\definecolor{linkblue}{rgb}{0,0.2,0.6}
\numberwithin{figure}{section}
\DeclareFontFamily{OMS}{rsfs}{\skewchar\font'60}
\DeclareFontShape{OMS}{rsfs}{m}{n}{<-5>rsfs5 <5-7>rsfs7 <7->rsfs10 }{}
\DeclareSymbolFont{rsfs}{OMS}{rsfs}{m}{n}
\DeclareSymbolFontAlphabet{\scr}{rsfs}
\DeclareSymbolFontAlphabet{\scr}{rsfs}
\DeclareMathOperator{\codim}{codim}
\DeclareMathOperator{\rank}{rank}
\DeclareMathOperator{\reg}{reg}
\DeclareMathOperator{\Spec}{Spec}
\DeclareMathOperator{\gr}{gr}
\DeclareMathOperator{\Cl}{Cl}
\DeclareMathOperator{\ssc}{sc}
\newcommand{\sD}{\scr{D}}
\newcommand{\sF}{\scr{F}}
\newcommand{\sO}{\scr{O}}
\newcommand{\cO}{\mathcal O}
\newcommand{\cR}{\mathcal R}
\newcommand{\0}{\mathcal O}
\newcommand{\bA}{\mathbb{A}}
\newcommand{\bG}{\mathbb{G}}
\newcommand{\bK}{\mathbb{K}}
\newcommand{\bP}{\mathbb{P}}
\newcommand{\bQ}{\mathbb{Q}}
\newcommand{\bT}{\mathbb{T}}
\newcommand{\bZ}{\mathbb{Z}}
\newcommand{\fg}{\mathfrak{g}}
\newcommand{\fp}{\mathfrak{p}}
\newcommand{\fu}{\mathfrak{u}}
\newcommand{\fX}{\mathfrak{X}}
\newcommand{\aS}{{\sf S}}
\theoremstyle{plain}
\newtheorem{thm}{Theorem}[section]
\newtheorem{conjecture}[thm]{Conjecture}
\newtheorem{cor}[thm]{Corollary}
\newtheorem{defn}[thm]{Definition}
\newtheorem{lem}[thm]{Lemma}
\newtheorem{ques}[thm]{Question}
\newtheorem{prop}[thm]{Proposition}
\theoremstyle{remark}
\newtheorem{c-n-d}[thm]{Claim and Definition}
\newtheorem{example}[thm]{Example}
\newtheorem{rem}[thm]{Remark}
\newtheorem*{rem-nonumber}{Remark}
\numberwithin{equation}{thm}
\setlist[enumerate]{label=(\thethm.\arabic*), before={\setcounter{enumi}{\value{equation}}}, after={\setcounter{equation}{\value{enumi}}}}
\newcommand{\factor}[2]{\left. \raise 2pt\hbox{$#1$} \right/\hskip -2pt\raise -2pt\hbox{$#2$}}
\author{Baohua Fu}
\address{Baohua Fu, State Key Laboratory of Mathematical Sciences, Morningside Center of Mathematics, Academy of Mathematics and Systems Science, Chinese Academy of Sciences, Beijing 100190, China;   and School of Mathematical Sciences, University of Chinese Academy of Sciences, Beijing, China}
\email{\href{bhfu@math.ac.cn}{bhfu@math.ac.cn}}
\urladdr{\href{http://www.math.ac.cn/people/fbh/}{http://www.math.ac.cn/people/fbh/}}
\author{Jie Liu} %
\address{Jie Liu, Institute of Mathematics, Academy of Mathematics and Systems Science, Chinese Academy of Sciences, Beijing, 100190, China}
\email{\href{jliu@amss.ac.cn}{jliu@amss.ac.cn}}
\urladdr{\href{http://www.jliumath.com}{http://www.jliumath.com}}
\keywords{symplectic singularities, nilpotent orbits,  basic affine spaces, horospherical varieties, cotangent bundles}
\subjclass[2020]{14J42,22E10,14B05,14M17,14E30}
\title[]{The affine closure of cotangent bundles of horospherical spaces}
\date{\today}
\DeclareMathOperator{\GL}{GL}
\DeclareMathOperator{\SL}{SL}
\begin{document}
	
	\begin{abstract}
        For a smooth quasi-affine variety $X$, the affine closure $\overline{T^*X} \coloneqq \Spec(\bK[T^*X])$ contains $T^*X$ as an open subset, and its smooth locus carries a symplectic structure. A natural question is whether $\overline{T^*X}$ itself is a symplectic variety. A notable example is the conjecture of Ginzburg and Kazhdan, which predicts that $\overline{T^*(G/U)}$ is a symplectic variety for a maximal unipotent subgroup $U$ in a reductive linear algebraic group $G$. This conjecture was recently proved by Gannon using representation-theoretic methods. 

        In this paper, we provide a new geometric approach to this conjecture. Our method allows us to prove a more general result: $\overline{T^*(G/H)}$ is a symplectic variety for any horospherical subgroup $H$ in $G$ such that $G/H$ is quasi-affine. In particular, this implies that the affine closure $\overline{T^*(G/[P,P])}$ is a symplectic variety for any parabolic subgroup $P$ in $G$.
	\end{abstract}

	\maketitle
	\tableofcontents
	
	\section{Introduction}

    Throughout this paper, we work over an algebraically closed field $\bK$ of characteristic zero. For an irreducible normal variety $Y$, the $\bK$-algebra 
   $\bK[Y]$ of regular functions on $Y$ is an integrally closed domain, and the \emph{affinization} (or \emph{affine closure}) of $Y$ is defined as the affine normal scheme $\overline{Y} \coloneqq \operatorname{Spec} \bK[Y]$. 
    
	\subsection{Affine closure of cotangent bundles}
	
Let $X$ be a smooth variety. Recall that the algebra $\bK[T^*X]$ is canonically isomorphic to the algebra of symmetric tensors
\[
S(X) \coloneqq \bigoplus_{p \geq 0} H^0(X, \aS^p T_X) = \bigoplus_{p \geq 0} H^0(\mathbb{P}T_X, \mathscr{O}_{\mathbb{P}T_X}(p)). \footnote{Throughout this paper, the projectivization $\bP E$ of a vector bundle $E$ is defined in the sense of Grothendieck, i.e. $\mathbb{P}E \coloneqq \operatorname{Proj}_X \aS^\bullet E$.}
\]
Then there exists a natural dominant affinization morphism
\[
\varphi_X: T^*X \to \overline{T^*X}=\operatorname{Spec} (S(X)),
\]
which is generally non-surjective. Note that in general, $S(X)$ is not finitely generated, even for projective manifolds (see Section~\ref{e.nonfinitegenerated}). However, despite its straightforward definition, the affine scheme $\overline{T^*X}$ and the morphism $\varphi_X$ have received limited attention in the literature (cf.~\cite{BeauvilleLiu2024,FuLiu2025}). In this context, the following fundamental questions naturally arise:
	
	\begin{ques} \label{q.1}
		\begin{enumerate}
			\item When is $S(X)$ finitely generated (over $\bK$)?
			
			\item What kind of singularities can $\overline{T^*X}$ have?
		\end{enumerate}
	\end{ques}

The canonical symplectic structure on $T^*X$ induces a Poisson structure on $S(X)$ known as the \emph{Schouten--Nijenhuis bracket}, making $\overline{T^*X}$ a normal integral affine Poisson scheme, so it is natural to consider the notion of symplectic singularities as introduced by Beauville~\cite{Beauville2000a}. Recall that a normal variety $W$ is \emph{symplectic} if there exists a symplectic form $\omega$ on the smooth locus $W_{\text{reg}}$ of $W$ such that for any resolution $Z \to W$, the $2$-form $\omega$ extends to a regular $2$-form on $Z$. 

If $X$ is affine, then $T^*X$ is also affine, and thus $\varphi_X$ becomes an identity, implying that $\overline{T^*X} = T^*X$ is again a smooth symplectic variety. When $X$ is projective, in~\cite{FuLiu2025} it has been shown  that $\varphi_X$ is birational if and only if $T_X$ is big. In this case, the affine closure $\overline{T^*X}$ is a symplectic variety if and only if $\mathbb{P} T_X$ is of Fano type. A typical example is when $X$ is a projective rational homogeneous space; in this case, $\varphi_X$ is the Stein factorization of the moment map and $\overline{T^*X}$ is a finite cover of a nilpotent orbit closure in $\mathfrak{g}$.

In this paper, we investigate Question~\ref{q.1} for quasi-affine horospherical spaces, motivated by the study of the \emph{basic affine space} $G/U$, where $G$ is a connected reductive group and $U$ is a maximal unipotent subgroup. The affine closure $\overline{T^*(G/U)}$, called \emph{universal implosion}, plays an important role in the Dancer--Kirwan--Swann theory of hyperk\"ahler implosions \cite{DancerKirwanSwann2013} as well as in the context of $3d \ \mathcal{N} = 4$ quiver gauge theory \cite{BourgetDancerGrimmingerHananyZhong2022}. The finite generation of $S(G/U)$ was proved for $G=\SL_n$ in \cite{DancerKirwanSwann2013} and  for general reductive groups in \cite{GinzburgRiche2015}. Moreover, V.~Ginzburg and D.~Kazhdan conjectured in~\cite[Conjecture~1.3.6]{GinzburgKazhdan2022} that $\overline{T^*(G/U)}$ is a symplectic variety.  This conjecture has been verified by B.~Jia in~\cite{Jia2025a} for $G = \mathrm{SL}_n$ and by T.~Gannon in~\cite{Gannon2024} for general $G$. The key idea of their proofs is to use geometric representation theory to show that the codimension of the singular locus is at least four, which in fact establishes the stronger result that $\overline{T^*(G/U)}$ has terminal symplectic singularities.
	
	\subsection{Quasi-affine horospherical spaces}
	
	Note that $U=[B,B]$ for the Borel subgroup $B$ of $G$ containing $U$, thus it is natural to ask whether the Ginzburg--Kazhdan conjecture still holds for the quasi-affine homogeneous space $G/[P, P]$, where $P$ is a parabolic subgroup of $G$. We observe that $G/[P,P]$ is an example of so-called \emph{horospherical spaces}. Here, recall that a homogeneous space $G/H$ is called \emph{horospherical} if $G$ is reductive and the normalizer $N_G(H)$ is parabolic such that $N_G(H)/H$ is a torus, or equivalently $H$ contains a maximal unipotent subgroup $U$ of $G$ \cite[\S\,7]{Timashev2011}.  
	
	Our first result confirms a generalization of Ginzburg--Kazhdan's conjecture for quasi-affine horospherical homogeneous spaces.

\begin{thm}
    \label{t.MainThmHorospherical}
    Let $G$ be a connected reductive group, and let $H \subset G$ be a horospherical subgroup such that $G/H$ is quasi-affine. Then the $\bK$-algebra $S(G/H)$ is finitely generated and $\overline{T^*(G/H)}$ is a symplectic variety.
\end{thm}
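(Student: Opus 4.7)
Write $P = N_G(H) = L\ltimes U_P$ for the Levi decomposition and $H = H'\ltimes U_P$, so that $[L,L]\subseteq H'\subseteq L$ and $T\coloneqq P/H = L/H'$ is a torus. The quotient $\pi\colon G/H\to G/P$ is then a principal $T$-bundle. Via the Killing form identification $\fg\cong \fg^*$, the $P$-stable annihilator $\fh^\perp$ decomposes as $\mathfrak{a}\oplus\fu_P^-$, with $\mathfrak{a}\subset\mathfrak{z}(\fl)$ the orthogonal complement of $\fh\cap\fl$ and $\fu_P^-$ the nilradical of the opposite parabolic. The $G$-equivariant moment map $\mu\colon T^*(G/H) = G\times^H\fh^\perp\to\fg^*$, $[g,\xi]\mapsto\Ad^*(g)\xi$, has image whose closure $Y\coloneqq\overline{G\cdot\fh^\perp}$ is an affine $G$-variety in $\fg^*$.

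\textbf{Steps 1 and 2 (the geometric model and finite generation).} I would introduce the smooth $G$-equivariant vector bundle $\wtilde Y\coloneqq G\times^P \fh^\perp \to G/P$, through which $\mu$ factors as
\[
T^*(G/H)\xrightarrow{\ p\ } \wtilde Y \xrightarrow{\ \bar\mu\ } Y,
\]
where $p$ is a principal $T$-bundle and $\bar\mu$ is projective (being a morphism from a vector bundle over a projective base). When $\mathfrak{a}=0$, $\bar\mu$ is exactly the Kempf collapsing onto the Richardson orbit closure $\overline{G\cdot\fu_P^-}$; the general case is an $\mathfrak{a}$-parameter family of such collapsings. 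Projectivity of $\bar\mu$ shows $k[\wtilde Y]$ is a finite $k[Y]$-module, and the $T$-weight decomposition
\[
k[T^*(G/H)] = \bigoplus_{\chi\in X^*(T)} H^0\bigl(\wtilde Y,\sL_\chi\bigr)
\]
is supported on a finitely generated sub-monoid of $X^*(T)$ by the quasi-affinity of $G/H$. Combining these yields finite generation of $S(G/H)$ and shows that $\overline{T^*(G/H)}$ is the Stein factorization of $\mu$, hence a finite cover of $Y$.

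\textbf{Step 3 and main obstacle (symplectic singularities).} Quasi-affineness of $G/H$ makes $T_{G/H}$ big, so by \cite{FuLiu2024a} the affinization morphism $\varphi\colon T^*(G/H)\to\overline{T^*(G/H)}$ is birational and the standard symplectic form descends to a symplectic form on the smooth locus of $\overline{T^*(G/H)}$. To verify Beauville's extension criterion — this is the main difficulty — I would build an explicit partial resolution of $\overline{T^*(G/H)}$ using $\wtilde Y$ together with a normalisation of the Stein factorization of $\bar\mu$, and then extend the $2$-form along the $T$-equivariant geometry provided by $p$, leveraging that $\overline{G\cdot\fu_P^-}$ is already known to have symplectic singularities via Fu's theorem on normalisations of nilpotent orbit closures. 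The technical heart is controlling the interaction between the torus direction $\mathfrak{a}$ and the nilpotent direction $\fu_P^-$: one must rule out pole contributions when pulling the extended form back to a resolution, and this is precisely where the horospherical hypothesis — that $P/H$ is a torus, giving a $T$-weight grading and keeping the fibres of $\bar\mu$ generically Lagrangian — is indispensable, in contrast to Gannon's codimension-of-singular-locus argument which bypasses this entirely.
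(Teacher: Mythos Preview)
Your geometric setup --- the factorization through $\wtilde Y = G\times^P \fh^\perp$ --- coincides with the paper's bundle $E$, but both the finite-generation argument and the symplectic argument contain real gaps.

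First, the claim at the end of Step~2 that $\overline{T^*(G/H)}$ is ``the Stein factorization of $\mu$, hence a finite cover of $Y$'' is false. Already for $H=U_B$ the unipotent radical of a Borel, one has $\fh^\perp \cong \fb$ and $Y=\overline{G\cdot\fb}=\fg$, while $\dim T^*(G/U_B)=\dim\fg + \rank(G) > \dim\fg$; the moment map $\mu$ has positive-dimensional generic fibres, so $\overline{T^*(G/H)}$ cannot be finite over $Y$. More seriously, your finite-generation argument is incomplete: knowing that $k[\wtilde Y]$ is a finite $k[Y]$-module and that the weight decomposition is supported on a finitely generated sub-monoid of $X^*(T)$ does \emph{not} imply that $\bigoplus_\chi H^0(\wtilde Y,\sL_\chi)$ is finitely generated --- this is precisely a Zariski-type obstruction, and a genuine extra input is needed here which you have not identified.

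The paper supplies that input via Cox rings. The $T$-quotient $\mathbb{P}T_{G/H}\to\mathbb{P}E^*$ induces a graded isomorphism $\cR(\mathbb{P}T_{G/H})\cong\cR(\mathbb{P}E^*)$ (Theorem~\ref{t.Coxring-torusquotient}), and the key fact --- from \cite[Lemma~5.3]{FuLiu2024a} --- is that $\mathbb{P}E^*$ is \emph{weak Fano}, hence of Fano type, so its Cox ring is finitely generated with klt spectrum (Theorem~\ref{t.Sing-Cox-Fano-type}). Then $S(G/H)$ is the invariant subring of this Cox ring under the quasitorus $\Spec k[\Cl(G/H)]$, which yields both finite generation and klt singularities in one stroke (Theorem~\ref{t.quotion-reductive}). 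For the symplectic conclusion one then only needs that $T^*(G/H)\hookrightarrow\overline{T^*(G/H)}$ is an open immersion with small boundary (automatic for quasi-affine $G/H$, Proposition~\ref{prop.AffClo}), so the canonical divisor is trivial and klt $=$ canonical $=$ rational $=$ symplectic via Namikawa. Your Step~3 plan --- extending the form by hand through a partial resolution and controlling the interaction between the $\mathfrak{a}$ and $\fu_P^-$ directions --- leaves the acknowledged ``technical heart'' entirely open; the Cox-ring/Fano-type argument sidesteps this difficulty completely and never touches nilpotent orbit closures.
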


As a corollary, we obtain the following result:

\begin{cor}
    \label{c.U_P/[P,P]-Cotangent}
    Let $G$ be a connected reductive group, and let $P$ be a parabolic subgroup of $G$.
    \begin{enumerate}[label=(\arabic*)]
        \item The $\bK$-algebra $S(G/[P, P])$ is finitely generated, and $\overline{T^*(G/[P, P])}$ is a $\mathbb{Q}$-factorial symplectic variety.
        
        \item If $[G,G]$ is simply connected, then $\overline{T^*(G/[P, P])}$ is factorial.
    \end{enumerate}
\end{cor}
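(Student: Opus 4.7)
The strategy is to reduce the corollary, where $G$ is merely reductive, to the semi-simple case covered by Theorem~\ref{t.MainThmHorospherical}, via the standard central isogeny. First I note that $[P,P]$ is horospherical in $G$: $N_G([P,P]) = P$, and $P/[P,P]$ is the torus $L/[L,L]$ for any Levi subgroup $L \subseteq P$. Moreover, $G/[P,P]$ is quasi-affine because $[P,P]$, as a derived subgroup, has trivial character group. Set $Z = Z(G)^\circ$ and $G' = [G,G]$. The multiplication map $\pi : Z \times G' \twoheadrightarrow G$ has finite central kernel $F \cong Z \cap G'$. Since $[P,P] \subseteq G'$, it descends to a finite étale Galois cover
\[
Z \times (G'/[P,P]) \twoheadrightarrow G/[P,P]
\]
with group $F$, the action being free because $F$ injects into $Z$ and acts there by translation.

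Applying Theorem~\ref{t.MainThmHorospherical} to the semi-simple group $G'$ and its horospherical subgroup $[P,P] = [P \cap G', P \cap G']$ shows that $S(G'/[P,P])$ is finitely generated and $\overline{T^*(G'/[P,P])}$ is symplectic. Since $T^*Z$ is already affine (being the cotangent bundle of a torus), the cover lifts and affinises to a free, finite étale $F$-quotient
\[
T^*Z \times \overline{T^*(G'/[P,P])} \twoheadrightarrow \overline{T^*(G/[P,P])}.
\]
The algebra $S(G/[P,P])$ is then the $F$-invariants of a finitely generated algebra, hence finitely generated. The source is a product of two symplectic varieties, the free $F$-action preserves the symplectic form, and therefore the quotient $\overline{T^*(G/[P,P])}$ is a symplectic variety.

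The remaining claims—$\mathbb{Q}$-factoriality in part (1), and factoriality in part (2) under the simple-connectedness hypothesis—I plan to derive by invoking the more detailed Theorem~\ref{t.GeneralHoro} referenced in the excerpt, which should furnish the Weil class group of $\overline{T^*(G'/[P,P])}$. Once the $\mathbb{Q}$-factoriality of the source is known, $\mathbb{Q}$-factoriality of the quotient $\overline{T^*(G/[P,P])}$ follows because the quotient map is finite étale. For part (2), when $[G,G]$ is simply connected the Picard group of $G'/[P,P]$ vanishes (all characters of $[P,P]$ being trivial), and one expects its class group to vanish upon inspection of the boundary divisors of the affine closure. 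The main obstacle is exactly this last point: the symplectic conclusion of Theorem~\ref{t.MainThmHorospherical} does not by itself control the Weil class group, so one must appeal to the finer combinatorial-geometric description of the boundary of $\overline{T^*(G/H)}$ presumably established in Theorem~\ref{t.GeneralHoro}.
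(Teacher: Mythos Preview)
Your reduction strategy for part~(1) is essentially the paper's: pass to a central isogeny with a semi-simple factor times a torus, apply Theorem~\ref{t.GeneralHoro} to the semi-simple piece, and descend along the resulting \'etale Galois cover via Proposition~\ref{prop.EtaleCoverCotangentBdl}. The only difference is that the paper takes $G' = G_{\text{ss}} \times C$ with $G_{\text{ss}}$ \emph{simply connected}, rather than your $[G,G] \times Z(G)^\circ$; this gives $\Cl(G_{\text{ss}}/[P_{\text{ss}},P_{\text{ss}}]) = 0$ upstairs (hence factoriality from Theorem~\ref{t.GeneralHoro}), whereas your cover only gives $\Cl$ finite. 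Either way, $\bQ$-factoriality descends by Theorem~\ref{t.quotion-reductive}, so part~(1) goes through.

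For part~(2) your plan has a genuine gap. You propose to establish factoriality upstairs and then descend, but factoriality does \emph{not} descend along finite (even free) group quotients: the class group of a quotient can acquire torsion coming from characters of $F$. Nor is ``inspection of boundary divisors'' the right tool. The paper's argument bypasses the cover entirely for this step. Since $G/[P,P]$ is quasi-affine, the affinization $T^*(G/[P,P]) \hookrightarrow \overline{T^*(G/[P,P])}$ has boundary of codimension $\geq 2$ (Proposition~\ref{prop.AffClo}), so
\[
\Cl\bigl(\overline{T^*(G/[P,P])}\bigr) = \Cl\bigl(T^*(G/[P,P])\bigr) = \Cl\bigl(G/[P,P]\bigr).
\]
Now Lemma~\ref{l.Picard-group} gives an exact sequence $\bX(G) \to \bX([P,P]) \to \Cl(G/[P,P]) \to \Cl(G)$; here $\bX([P,P]) = 0$, and when $[G,G]$ is simply connected also $\Cl(G) = 0$, so $\Cl(G/[P,P]) = 0$ and $\overline{T^*(G/[P,P])}$ is factorial. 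The point you were missing is that the small-boundary property already computes the class group of the affine closure directly on $G/[P,P]$; there is nothing to inspect and no need to invoke Theorem~\ref{t.GeneralHoro} here.
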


Unlike previous works on $G/U$ in the literature, our approach to Theorem \ref{t.MainThmHorospherical} has a geometric nature, following a strategy developed in our previous work \cite[\S\,5]{FuLiu2025}. The key observation is that $S(G/H)$ can be realized as the invariant ring of the Cox ring of $G/H$ under the action of a quasi-torus and the latter one is isomorphic to the Cox ring of a weak Fano manifold (cf.~\cite[\S\,5.2]{FuLiu2025}).

\subsection{Cones of highest weight vectors}

The simplest example of a quasi-affine horospherical space is the punctured affine cone $X$ of an equivariant embedding $Z\subset \bP^N$ of a rational homogeneous space (see \S\,\ref{ss.HVcones}). However, it is a very difficult problem to give an explicit description of the singularities of $\overline{T^*X}$ even for a variety $Z$ as simple as the quadric. For example, the affine closure of $\SL_3/U$ is isomorphic to the quadric cone $\widehat{\bQ}^5\subset \bA^6$ so that $\overline{T^*(\SL_3/U)}\cong \overline{T^*X}$, where $X\coloneqq \widehat{\bQ}^5\setminus\{0\}$ is the $5$-dimensional punctured quadric cone, therefore \cite[Corollary 3.8]{Jia2025a} implies that $\overline{T^*X}$ is isomorphic to the minimal nilpotent orbit closure $\overline{\cO}_{\min}$ in $\mathfrak{so}_8$. 

 Our second result gives a similar description for certain cones of highest weight vectors associated with irreducible Hermitian symmetric spaces.

\begin{thm}
    \label{thm.coneIHSS}
    Let $G$ be a connected simple group, and let $G/P$ be an irreducible Hermitian symmetric space  different from projective spaces. Let $L$ be the Levi subgroup of $P$ with $v$ a highest weight vector in the nilradical $\mathfrak{u}_P \subset \mathfrak{p}$ for $L$. Let $X\coloneqq L\cdot v \subset \mathfrak{u}_P$ be the cone of highest weight vectors. Then $\overline{T^*X}$ is isomorphic to the minimal nilpotent orbit closure $\overline{\mathcal{O}}_{\text{min}} \subset \mathfrak{g}$.
\end{thm}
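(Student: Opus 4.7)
The plan is to realise $X$ as a Lagrangian subvariety of $\mathcal{O}_{\min}$, to build an explicit open immersion $T^*X \hookrightarrow \mathcal{O}_{\min}$ using the abelianness of $\mathfrak{u}_P^-$, and to conclude by a codimension--two argument after passing to affine closures.

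The first step is to identify $X$ inside $\mathcal{O}_{\min}$. Since $P$ is cominuscule, the nilradical $\mathfrak{u}_P$ is abelian and the highest root vector $e_\theta$ of $\mathfrak{g}$ lies in $\mathfrak{u}_P$. Because $U_P$ acts trivially on $\mathfrak{u}_P$, one has $X = P \cdot e_\theta = L \cdot e_\theta$, and a direct inspection identifies $X$ with $\mathcal{O}_{\min} \cap \mathfrak{u}_P$. With respect to the grading $\mathfrak{g} = \mathfrak{u}_P^- \oplus \mathfrak{l} \oplus \mathfrak{u}_P$ and the Killing form $\kappa$, the KKS form satisfies
\[
\omega_x([\xi, x], [\eta, x]) = \kappa(x, [\xi, \eta]) = 0 \quad \text{for all } \xi, \eta \in \mathfrak{l},
\]
because $x \in \mathfrak{u}_P$ is Killing-orthogonal to $\mathfrak{l}$; combined with the dimension equality $2\dim X = \dim \mathcal{O}_{\min}$ (coming from the VMRT description of $X$), this shows $X \subset \mathcal{O}_{\min}$ is Lagrangian, with normal bundle fibre $N_{X/\mathcal{O}_{\min}, x} = [\mathfrak{u}_P^-, x]$.

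Second, for $(x, \xi) \in T^*X$ choose any $y \in \mathfrak{u}_P^-$ with $[y, x] = \xi$ under the surjection $\mathrm{ad}(-)(x) : \mathfrak{u}_P^- \onto [\mathfrak{u}_P^-, x]$, and set
\[
\phi(x, \xi) = \exp(\mathrm{ad}\, y)(x) = x + [y, x] + \tfrac12 [y, [y, x]] \in \mathcal{O}_{\min}.
\]
The series truncates since $[\mathfrak{u}_P^-, \mathfrak{u}_P^-] = 0$, and the Jacobi identity together with $[y_0, x] = 0$ for kernel elements $y_0$ gives independence of the lift. Decomposing $\phi(x, \xi)$ along $\mathfrak{g} = \mathfrak{u}_P^- \oplus \mathfrak{l} \oplus \mathfrak{u}_P$, the $\mathfrak{u}_P$-component is $x$ and the $\mathfrak{l}$-component is $\xi$, so $\phi$ is injective. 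Its differential at the zero section realises the canonical splitting $T_xX \oplus [\mathfrak{u}_P^-, x] \simeq T_x \mathcal{O}_{\min}$, hence $\phi$ is étale; by Zariski's Main Theorem it is an open immersion.

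Third, the image $\phi(T^*X) = \{z \in \mathcal{O}_{\min} : z_{\mathfrak{u}_P} \in X\}$ has complement $\{0\} \cup \bigl(\mathcal{O}_{\min} \cap (\mathfrak{l} \oplus \mathfrak{u}_P^-)\bigr)$ in $\overline{\mathcal{O}_{\min}}$. The core technical point is to show that, for an IHSS of rank at least two, this complement has codimension at least two in $\overline{\mathcal{O}_{\min}}$; a direct computation in $\mathrm{Gr}(2,4)$ confirms this (the complement decomposes as a union of two irreducible components each of codimension two), while in the rank-one case the codimension is exactly one, which is why the rank hypothesis is essential. Granting the codimension-two bound, normality of $\overline{\mathcal{O}_{\min}}$ gives
\[
k[T^*X] \cong k[\phi(T^*X)] = k[\mathcal{O}_{\min}] = k[\overline{\mathcal{O}_{\min}}],
\]
and taking $\mathrm{Spec}$ yields the desired isomorphism $\overline{T^*X} \simeq \overline{\mathcal{O}_{\min}}$. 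The main obstacle is this codimension estimate, which I expect to be handled either by a case-by-case analysis of the IHSS types (classical Grassmannians, Lagrangian and orthogonal Grassmannians, the spinor variety, and the exceptional $E_6/P_1$ and $E_7/P_7$) or by a uniform argument using the Bruhat/Weyl-group combinatorics of cominuscule parabolics applied to the orbit structure of $\mathcal{O}_{\min}$ under the $(L \ltimes U_P^-)$-action.
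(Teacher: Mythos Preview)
Your overall strategy matches the paper's: realise $X=\mathcal{O}_{\min}\cap\mathfrak{u}_P$ as Lagrangian in $\mathcal{O}_{\min}$, identify an open subset $W\subset\mathcal{O}_{\min}$ with $T^*X$, and conclude via a codimension-two argument. The paper in fact derives the theorem as the case $\mathcal{O}=\mathcal{O}_{\min}$ of a more general result (its Theorem~\ref{thm.asscociatedvar}). The main technical difference lies in how $W\cong T^*X$ is obtained. The paper introduces a $k^\times$-action $\phi_\lambda$ on $\mathfrak{g}$ with weights $0,1,2$ on $\mathfrak{u}^+,\mathfrak{l},\mathfrak{u}^-$, checks $\phi_\lambda^*\omega=\lambda\omega$, and runs an attracting-set argument (after \cite[Lemma~3.7]{Fu2003}): the fixed locus is $X$, the tangent space $T_x\mathcal{O}$ has only weights $0$ and $1$, and $\omega$ pairs $T_x^1$ with $T_x^0=T_xX$, giving $W\cong T^*X$ directly. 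Your explicit $\exp(\mathrm{ad}\,y)$ construction is more concrete, but the paper's method handles the Lagrangian property and the identification simultaneously and carries over verbatim to the non-minimal orbits in Table~\ref{table.Abelian}, whereas your description of the complement as $\mathcal{O}_{\min}\cap(\mathfrak{l}\oplus\mathfrak{u}_P^-)$ relies on $\overline{X}\setminus X=\{0\}$, special to the highest-weight orbit.

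Two points in your argument need tightening. First, ``differential is an isomorphism at the zero section, hence $\phi$ is \'etale'' is not a valid implication; what actually yields the open immersion is injectivity between smooth varieties of the same dimension in characteristic zero (via Zariski's Main Theorem). Second, you assert without proof that $\phi(T^*X)=\{z\in\mathcal{O}_{\min}:z_{\mathfrak{u}_P}\in X\}$, i.e.\ that each fibre $\pi_+^{-1}(x)$ is a single $U_P^-$-orbit. This is true but requires an argument: for instance, $U_P^-\cdot x$ is closed in $x+\mathfrak{l}+\mathfrak{u}_P^-$ as the graph of the quadratic map $[y,x]\mapsto\tfrac12[y,[y,x]]$, while irreducibility of the fibre is not obvious from your setup. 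The paper's $k^\times$-action sidesteps this entirely, since the attracting cell of $x$ is automatically an affine space.

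On the codimension-two estimate, the paper is no more self-contained than you are: it cites \cite[Theorem~4.7 and the Key Lemma]{LevasseurSmithStafford1988}. Your proposed case-by-case verification would work, but invoking that reference is what the paper does and is certainly more efficient.
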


We refer the reader to Table \ref{Table.IHSS-Cone} for a list of $X$  that appeared in Theorem \ref{thm.coneIHSS}. An immediate consequence of Theorem \ref{thm.coneIHSS} is that for a $n$-dimensional punctured quadric cone $X$, the affine closure $\overline{T^*X}$ is isomorphic to $\overline{\cO}_{\min}\subset \mathfrak{so}_{n+3}$, generalizing Jia's result in dimension $5$ mentioned above (see Example \ref{e.quadric}), and if $\mathfrak{g}$ is not of type $G_2$, $F_4$ or $E_8$, then $\mathcal{O}_{\text{min}}$ contains a large open subset isomorphic to $T^*X$, which is somewhat surprising. We note that employing different methods, the minimal nilpotent orbit closures in $\mathfrak{so}_{2n}$ (\cite{Jia2025}) and $\mathfrak{e}_6$ (\cite{Jia2025,GannonWebster2025}) have also been realized as the affine closures of cotangent bundles (distinct from ours). 

Inspired by the results above, we propose the following question:

\begin{ques} \label{q.2}
	Let $Z \subset \mathbb{P}^N$ be a smooth projective variety and let $X\subset \bA^{N+1}$ be its punctured affine cone. When is $\overline{T^*X}$ a symplectic variety?
\end{ques}

	\subsection{Parabolic basic affine spaces}

    Another natural generalization of basic affine spaces is the \emph{parabolic basic affine spaces} $G/U_P$, where $U_P$ is the unipotent radical of a parabolic subgroup $P$ of a connected reductive group $G$. It is expected (cf. \cite[Section 8]{DancerHananyKirwan2021} and \cite{BourgetDancerGrimmingerHananyZhong2022}) that $S(G/U_P)$ is finitely generated and that $\overline{T^*(G/U_P)}$, referred to as the \emph{partial implosion}, is symplectic. This has been confirmed for $G = \SL_n$ or $\GL_n$ by T.~Gannon and B.~Webster in \cite{GannonWebster2025}. 

 Following the idea in the proof of Theorem \ref{t.MainThmHorospherical}, we reformulate the above expectation from the perspective of birational geometry as follows. For the definitions of Mori dream spaces and varieties of Fano type we refer to \S\,\ref{ss.DefnCoxRing}.

\begin{thm} \label{t.G/UP}
    Let $G$ be a connected semisimple group, and let $U_P$ be the unipotent radical of a parabolic subgroup $P$ of $G$. Denote by $E$ the homogeneous vector bundle $G\times_B \fu_P^{\perp}$ over $G/B$.
    \begin{enumerate}
        \item\label{i1.t.G/UP} The $\bK$-algebra $S(G/U_P)$ is finitely generated if and only if $\bP E^*$ is a Mori dream space.
        
        \item\label{i2.t.G/UP} The variety $\overline{T^*(G/U_P)}$ is symplectic if and only if $\mathbb{P} E^*$ is of Fano type.
    \end{enumerate}
\end{thm}

Based on Theorem \ref{t.G/UP}, we propose the following conjecture, which is supported by the evidence provided in Proposition \ref{p.weakfano}.

\begin{conjecture}
    \label{conj.PEFanoType}
    The projective bundle $\mathbb{P} E^*$ is of Fano type.
\end{conjecture}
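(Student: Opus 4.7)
The plan is to establish the Fano type of $\mathbb{P} E^*$ by realizing it as a projectivized partial Grothendieck--Springer resolution and constructing an explicit boundary divisor $\Delta$ making $(\mathbb{P} E^*, \Delta)$ a klt log Fano pair.

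Using the Killing form identification $\mathfrak{u}_P^{\perp} \cong \mathfrak{p}$ as $P$-modules, the bundle $E = G \times_B \mathfrak{u}_P^{\perp}$ is the pullback of $E' := G \times_P \mathfrak{p}$ along $\pi \colon G/B \to G/P$, so $\mathbb{P} E^* = G/B \times_{G/P} \mathbb{P} (E')^*$. Since $\pi$ is a smooth fibration with Fano (flag) fibers, Fano type should pass between $\mathbb{P}(E')^*$ and $\mathbb{P} E^*$, and one may reduce the conjecture to showing $\mathbb{P}(E')^*$ is of Fano type. The latter admits the transparent incidence description
$$
\mathbb{P}(E')^* \;\cong\; \big\{\, (gP, [v]) \in G/P \times \mathbb{P}\mathfrak{g} \;\big|\; v \in g \mathfrak{p}\, \big\},
$$
i.e., the projectivization of the partial Grothendieck--Springer map $G \times_P \mathfrak{p} \to \mathfrak{g}$.

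A direct computation using the projective bundle formula gives
$$
-K_{\mathbb{P}(E')^*} \;=\; (\dim \mathfrak{p})\, \xi' \;-\; 2\, \pi'^{*} K_{G/P},
$$
where $\xi' = c_1(\mathcal{O}_{\mathbb{P}(E')^*}(1))$ and $\pi' \colon \mathbb{P}(E')^* \to G/P$; the factor $2$ arises because $c_1(E')$ and $-K_{G/P}$ have opposite $P$-characters $\pm 2\rho_P$. The pullback term $-2\pi'^{*}K_{G/P}$ is ample, but $\xi'$ alone is only relatively ample: $(E')^*$ fits into the exact sequence $0 \to G \times_P \mathfrak{l} \to (E')^* \to T_{G/P} \to 0$, mixing the ample tangent bundle $T_{G/P}$ with the (not nef) adjoint bundle of the Levi. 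Hence a boundary divisor is needed: I would build $\Delta = \sum c_i D_i$, with $c_i \in \mathbb{Q} \cap (0,1)$, from natural $G$-invariant divisors on $\mathbb{P}(E')^*$ --- proper transforms under $\mathbb{P}(E')^* \dashrightarrow \mathbb{P}\mathfrak{g}$ of $G$-stable divisors on $\mathbb{P}\mathfrak{g}$, together with codimension-one incidence loci coming from specific nilpotent orbit closures in $\mathfrak{g}$. The klt condition is automatic from smoothness of $\mathbb{P} E^*$, and the remaining task is to verify ampleness of $-(K_{\mathbb{P} E^*} + \Delta)$ by testing against the Mori cone generators, which combine the fibers of $\pi'$ with Springer-type curves coming from the incidence structure.

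The principal obstacle is precisely this ampleness verification. Since $\mathbb{P} E^*$ is not $G$-spherical outside small cases such as $P = B$, the Brion--Luna--Vust combinatorics for spherical boundaries do not apply, and the Mori cone must be controlled by other means. A promising strategy is to leverage the $P = B$ case, which already follows from the horospherical main theorem of this paper, and propagate it to general $P$ via a Rees-type degeneration of $\mathfrak{p}$ to $\mathfrak{b}$, or via a representation-theoretic bookkeeping device generalizing the type $A$ argument of Gannon--Webster. Designing such a uniform construction across all simple types is the key technical hurdle.
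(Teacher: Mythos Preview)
This statement is a \emph{conjecture} in the paper; no proof is given. The only evidence offered is Proposition~\ref{p.weakfano}, which shows that $\mathbb{P} F^*$ is weak Fano for $F = G \times_P \mathfrak{u}_P^{\perp}$. Under the Killing form identification $\mathfrak{u}_P^{\perp} \cong \mathfrak{p}$, your $E'$ is exactly the paper's $F$, so the paper has already established that your target $\mathbb{P}(E')^*$ is of Fano type. Your argument for this piece contains two errors. From the sequence $0 \to T^*_{G/P} \to G/P \times \mathfrak{g} \to F^* \to 0$ one reads off $c_1((E')^*) = -K_{G/P}$, whence the projective bundle formula gives $-K_{\mathbb{P}(E')^*} = (\dim \mathfrak{p})\,\xi'$ with no residual $K_{G/P}$ term; your expression is rather $-K_{\mathbb{P} E'}$. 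And since $(E')^* = F^*$ is a quotient of a trivial bundle it is globally generated, so $\xi'$ is nef, not merely relatively ample as you claim. The paper then checks $\xi'$ is big via a Segre class computation, so $\mathbb{P}(E')^*$ is weak Fano outright and no boundary $\Delta$ is needed there.

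The genuine gap is your reduction step. You assert that Fano type ``should pass between $\mathbb{P}(E')^*$ and $\mathbb{P} E^*$'' along the $P/B$-fibration $\mathbb{P} E^* \to \mathbb{P}(E')^*$, but only the \emph{descent} direction is a theorem (images of Fano type varieties are of Fano type, Theorem~\ref{t.Sing-Cox-Fano-type}); ascent from base to total space of a flag bundle is not a general result, and establishing it here is precisely the content of the conjecture. Indeed, the paper invokes descent to present Proposition~\ref{p.weakfano} as a \emph{consequence} of the conjecture, not as a step toward it. Your proposal therefore locates the obstacle in the wrong place: the difficulty is not on $\mathbb{P}(E')^*$ over $G/P$ (already weak Fano) but in controlling $\mathbb{P} E^*$ over $G/B$, and nothing in your outline addresses the passage between them beyond the unjustified ``should pass''.
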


    \subsection*{Acknowledgments}
	  We are very grateful to the anonymous referees for the pertinent comments and suggestions that helped us improve this paper.   
    We would like to thank B.~Jia for useful discussions and I.~Losev for drawing our attention to the works of T.~Gannon. We are grateful to A.~Beauville, T.~Gannon and S.~Takagi for providing valuable feedback on the preliminary version of this paper. J.~Liu is supported by the National Key Research and Development Program of China (No. 2021YFA1002300) and the Youth Innovation Promotion Association CAS. Both authors are supported by the CAS Project for Young Scientists in Basic Research (No. YSBR-033) and the NSFC grant (No. 12288201).

	\section{Preliminaries}

    All varieties are assumed to be integral schemes of finite type over $\bK$, and all groups are assumed to be affine algebraic groups over $\bK$. We denote by $\bG_m$ the multiplicative group of $\bK$. Rings are assumed to be commutative with identity.
	
	\subsection{Cox rings and Mori dream spaces}
	\label{ss.DefnCoxRing}
	
	Let $X$ be a normal variety with a finitely generated free divisor class group $\Cl(X)$. Let $D_1, \dots, D_\rho$ be Weil divisors whose classes form a $\mathbb{Z}$-basis of $\Cl(X)$. The \emph{Cox ring} of $X$ is defined as the following $\Cl(X)$-graded ring:
	\[
	\cR(X) \coloneqq \bigoplus_{(m_1, \dots, m_\rho) \in \mathbb{Z}^\rho} H^0(X, \sO_X(m_1 D_1 + \dots + m_\rho D_\rho)).
	\]
	This definition can be extended to normal varieties $X$ with a finitely generated divisor class group $\Cl(X)$ that may have torsion, provided that $H^0(X, \sO_X^\times) = \bK^{\times}$. For further details on this construction, we refer the reader to \cite[\S\,1.4.2]{ArzhantsevDerenthalHausenLaface2015}.

	\begin{defn}
		\label{defn.MDS-Fano}
		Let $X$ be a normal projective variety.
		\begin{enumerate}
			\item We call $X$ a Mori dream space if $X$ is $\bQ$-factorial  with finitely generated $\Cl(X)$ and finitely generated Cox ring $\cR(X)$.
			
			\item We call $X$ of Fano type if there exists an effective $\bQ$-divisor such that $(X,\Delta)$ has Kawamata log terminal singularities (klt singularities) and $-(K_X+\Delta)$ is ample.
		\end{enumerate}
	\end{defn}
	
	We refer to \cite[\S\,2.3]{KollarMori1998} for the definition of singularities of a log pair. According to \cite[Corollary 1.3.2]{BirkarCasciniHaconMcKernan2010}, a $\bQ$-factorial projective variety of Fano type is always a Mori dream space.
	
	\begin{example}
		\label{ex.WeakFano}
		Recall that a projective manifold is called \emph{weak Fano} if $-K_X$ is big and nef. It is known that a weak Fano manifold is of Fano type. Indeed, since $-K_X$ is nef and big, there exists an effective $\bQ$-divisor $D$ such that $-K_X - \frac{1}{m} D$ is ample for all $m\gg 1$ (see \cite[Proposition 2.61]{KollarMori1998}). Moreover, by the smoothness of $X$, the pair $(X, \frac{1}{m} D)$ has klt singularities for all $m\gg 1$.
	\end{example}
	
		\subsection{Examples of non-finite generation}
	\label{e.nonfinitegenerated}
	
	We now give examples of smooth varieties whose algebra of symmetric tensors is not finitely generated.
	
	\begin{example}
		Let $X$ be a smooth variety such that $\bK[X]$ is not finitely generated over $\bK$. Let $\sigma \colon X \rightarrow T^*X$ be the zero section of $\pi \colon T^*X \rightarrow X$. Then the following composition of morphisms of $k$-algebras
		\[
		\bK[X] \stackrel{\pi^*}{\longrightarrow} \bK[T^*X] \stackrel{\sigma^*}{\longrightarrow} \bK[X]
		\]
		coincides with the identity. This implies that $\bK[T^*X]$ is not finitely generated.
	\end{example}
	
	\begin{example}
		Let $Y$ be a smooth projective variety with a nef and big line bundle $L$ on $Y$ such that the ring $R(Y, L)$ of sections is not finitely generated. Let $E$ be the rank two vector bundle $L^{-1} \oplus L$ over $Y$. Denote by $X$ the projective bundle $\pi \colon \mathbb{P}(E) \rightarrow Y$ and by $\sigma \colon Y \rightarrow \mathbb{P}(E)$ the section corresponding to the natural quotient $E \rightarrow L$. Let $Z\coloneqq \sigma(Y)$. Then we have $T_X|_Z\cong T_Z\oplus T_{X/Y}|_Z$, which yields
		\[
		\sigma^* T_X\cong \sigma^*T_Z\oplus \sigma^* T_{X/Y} \cong T_Y \oplus \sigma^* \sO_{\mathbb{P}(E)}(2) \cong T_Y \oplus L^{\otimes 2}.
		\]
		Consider the following composition of morphisms of $\bK$-algebras
		\[
		q \colon \bK[T^*X] \cong S(X) \longrightarrow \bigoplus_{p\geq 0} H^0(Y, \sigma^*\aS^p T_X) \longrightarrow \bigoplus_{p \geq 0} H^0(Y, \aS^p L^{\otimes 2}) = R(Y, L^{\otimes 2}).
		\]
		For any integer $p \geq 0$, the natural morphism
		\[
		\aS^{2p} E = \pi_* \sO_{\mathbb{P}(E)}(2p) = \sigma^* \pi^* \pi_* \sO_{\mathbb{P}(E)}(2p) \rightarrow \sigma^* \sO_{\mathbb{P}(E)}(2p) \cong L^{\otimes 2p}
		\]
		coincides with the natural projection $\aS^{2p} E \rightarrow \aS^{2p} L = L^{\otimes 2p}$. Thus, the natural inclusion
		\[
		\bigoplus_{p \geq 0} H^0(Y, \aS^{2p} E) = \bigoplus_{p \geq 0} H^0(X, T_{X/Y}^{\otimes p}) \longrightarrow \bK[T^*X]
		\]
		shows that $q$ is surjective and therefore $\bK[T^*X]$ is not finitely generated.
	\end{example}
	
	\subsection{Affine closure}
	
	We collect some basic facts on affine closures, which will be used in the latter. Firstly, we recall:
	
	\begin{thm}[\protect{\cite[Theorem 4.2]{Grosshans1997}}]
		\label{Thm.Grosshans-SmallBoundary}
		Let $X$ be a smooth quasi-affine variety. If $\bK[X]$ is finitely generated, then the natural morphism $X \rightarrow \overline{X}$ is an open embedding with a small boundary, i.e., $\codim(\overline{X} \setminus X) \geq 2$.
	\end{thm}
	
	\begin{proof}
		Since $\bK[X]$ is finitely generated, its affine closure $\overline{X}$ is an irreducible normal affine variety such that $X\rightarrow \overline{X}$ is an open embedding with $\bK[X]=\bK[\overline{X}]$; then the statement follows from \cite[Theorem 4.2]{Grosshans1997}.
	\end{proof}

	\begin{prop}
		\label{prop.FiniteCover}
		Let $f \colon X \rightarrow Y$ be a finite morphism between smooth quasi-affine varieties. 
		\begin{enumerate}
			\item\label{i1.Prop.FiniteCover} If $\bK[Y]$ is finitely generated, then the induced morphism $\overline{X} \rightarrow \overline{Y}$ is finite. In particular, $\bK[X]$ is finitely generated.
			
			\item\label{i2.Prop.FiniteCover} If $f$ is Galois, then $\bK[X]$ is finitely generated if and only if $\bK[Y]$ is finitely generated.
		\end{enumerate}
	\end{prop}
	
	\begin{proof}
		For \ref{i1.Prop.FiniteCover}, let $\iota \colon Y \rightarrow \overline{Y}$ be the natural open embedding. Since $f$ is proper and finite, the push-forward $f_*\sO_X$ is coherent and reflexive by \cite[Corollary 1.7]{Hartshorne1980}. As $\codim(\overline{Y} \setminus Y) \geq 2$ by Theorem \ref{Thm.Grosshans-SmallBoundary}, the sheaf $\iota_*f_*\sO_X$ is coherent and reflexive by Lemma \ref{lem.push-forward-coherence} below, which implies that 
		\[
		\bK[X] = H^0(Y, f_*\sO_X) = H^0(\overline{Y}, \iota_*f_*\sO_X)
		\]
		is a finite $\bK[Y]$-module.
		
		For \ref{i2.Prop.FiniteCover}, write $Y=X/\Gamma$ for a finite group $\Gamma$. Then $\bK[Y]=\bK[X]^\Gamma$ and it follows that $\bK[Y]$ is finitely generated if $\bK[X]$ is finitely generated.
	\end{proof}  
	
	The following result is well-known for experts; we include a proof for the reader's convenience.
	
	\begin{lem}
		\label{lem.push-forward-coherence}
		Let $X$ be a normal variety and let $\iota\colon X^\circ\rightarrow X$ be an open embedding such that $\codim(X\setminus X^\circ)\geq 2$. If $\sF$ is a coherent reflexive sheaf on $X^\circ$, then $\iota_*\sF$ is a coherent reflexive sheaf on $X$.
	\end{lem}
	
	\begin{proof}
		Let $\sF'$ be a coherent sheaf on $X$ such that $\sF'|_{X^\circ}\cong \sF$ \cite[II, Exercise 5.15]{Hartshorne1977}. Denote by $\sF''$ the double dual of $\sF'$. Then $\sF''$ is a coherent reflexive sheaf by \cite[Corollary 1.2]{Hartshorne1980} such that $\sF''|_{X^\circ}\cong \sF$ because $\sF$ itself is reflexive. As $\codim(X\setminus X^\circ)\geq 2$, it follows from \cite[Proposition 1.6 (iii)]{Hartshorne1980} that $\sF''\cong \iota_*\sF$. Hence $\iota_*\sF$ is a coherent reflexive sheaf.
	\end{proof}

	\begin{example}
		Let $M$ be a smooth affine variety on which a finite group $\Gamma$ acts freely on an open subset $M_0$ with $\codim(M\setminus M_0)\geq 2$. Let $Z = M / \Gamma$ be the quotient and let $X \subset Z$ be its smooth locus. Then $X$ contains $M_0 / \Gamma$ with a boundary of codimension $\geq 2$. Hence, $\bK[T^*X] \cong \bK[T^*(M_0 / \Gamma)]$, and since $T^*(M_0 / \Gamma)$ is an open subset of $(T^*M)/\Gamma$ with codimension $\geq 2$ boundary, we have $\overline{T^*X} \cong (T^*M)/\Gamma$.
	\end{example}

	\section{Quasi-affine horospherical spaces}
	\label{s.Horospherical}
	
	In this section, we prove Theorem \ref{t.MainThmHorospherical}, following the idea of \cite[\S\,5.2]{FuLiu2025}. The following result is a special case of \cite[Theorem 4.4.1.2]{ArzhantsevDerenthalHausenLaface2015} and  will play a central role in the proof. 
	
	\begin{thm}
		\label{t.Coxring-torusquotient}
		Let $X$ be a smooth variety with finitely generated $\Cl(X)$ and $H^0(X, \sO_X^\times) = \bK^\times$. Assume that there exists a free algebraic torus action $\mathbb{T} \times X \rightarrow X$ that admits a smooth geometric quotient $Y=X/\mathbb{T}$. Then $H^0(Y, \sO_Y^\times) = \bK^\times$, $\Cl(Y)$ is finitely generated, and there is a $\Cl(X)$-graded isomorphism 
		\[
		\cR(Y) \longrightarrow \cR(X).
		\]
	\end{thm}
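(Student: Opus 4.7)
My plan is to verify the three assertions in order — triviality of units, finite generation of the class group, and the graded isomorphism — using the standard descent formalism for $\mathbb{T}$-torsors. Throughout, write $Y \coloneqq X/\mathbb{T}$, let $M \coloneqq \Hom(\mathbb{T}, \mathbb{G}_m)$ denote the character lattice of $\mathbb{T}$, and for $\chi \in M$ let $L_\chi \coloneqq X \times^{\mathbb{T}} \mathbb{A}^1_\chi$ be the associated line bundle on $Y$. Since $\pi \colon X \to Y$ is a faithfully flat torsor with geometrically connected fibres, $\pi^{*}$ is injective on functions; restricting to units and combining with $H^0(X, \sO_X^{\times}) = k^{\times}$ gives $H^0(Y, \sO_Y^{\times}) = k^{\times}$ at once.

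Next, since both $X$ and $Y$ are smooth, $\Cl$ coincides with $\Pic$ on each, and I would establish the exact sequence
\[
0 \longrightarrow M \xrightarrow{\chi \mapsto L_\chi} \Cl(Y) \xrightarrow{\pi^{*}} \Cl(X) \longrightarrow 0.
\]
Injectivity of the left map is precisely where $H^0(X, \sO_X^{\times}) = k^{\times}$ is used: a trivialisation of $L_\chi$ is the same as a $\chi$-semi-invariant unit on $X$, forcing $\chi = 0$. Exactness in the middle is automatic since $\pi^{*} L_\chi$ is canonically trivial, and surjectivity of $\pi^{*}$ comes from descent — connectedness of $\mathbb{T}$ together with $H^0(X, \sO_X^{\times})/k^{\times} = 0$ endows every line bundle on $X$ with a unique $\mathbb{T}$-linearisation, and such line bundles descend along the torsor. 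Finite generation of $\Cl(Y)$ is then immediate from the finite generation of $M$ and of $\Cl(X)$.

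For the Cox ring isomorphism, the key geometric input is the isotypic decomposition
\[
\pi_{*} \sO_X \;=\; \bigoplus_{\chi \in M} L_\chi^{-1},
\]
which via the projection formula gives, for any Weil divisor $D$ on $Y$,
\[
H^0\bigl(X, \pi^{*} \sO_Y(D)\bigr) \;=\; \bigoplus_{\chi \in M} H^0\bigl(Y, \sO_Y(D) \otimes L_\chi^{-1}\bigr).
\]
Summing over a set of representatives for $\Cl(X) \cong \Cl(Y)/M$, the right-hand sides reassemble into $\cR(Y)$ as an abelian group, and the resulting $\Cl(X)$-grading on $\cR(X)$ is precisely the coarsening of the $\Cl(Y)$-grading on $\cR(Y)$ along $\pi^{*}$.

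The step I expect to be most delicate is matching this abstract decomposition with the rigid choices of divisor representatives that are built into the definition of the Cox ring as a concrete graded ring (rather than merely up to graded isomorphism), and verifying that the resulting bijection is multiplicative. Conceptually, this amounts to realising the Cox sheaf of $X$ as the $\mathbb{T}$-equivariant pushforward of the Cox sheaf of $Y$, and it is exactly this bookkeeping that is carried out once and for all in the universal torsor formalism of \cite[\S\,1.4]{ArzhantsevDerenthalHausenLaface2015}. After checking that our freeness and smoothness hypotheses imply those of \cite[Theorem 4.4.1.2]{ArzhantsevDerenthalHausenLaface2015}, the desired graded isomorphism $\cR(Y) \to \cR(X)$ follows.
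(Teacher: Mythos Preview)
The paper does not give its own proof of this theorem: it is stated as a special case of \cite[Theorem 4.4.1.2]{ArzhantsevDerenthalHausenLaface2015}, with no further argument. Your proposal ultimately lands on exactly the same citation, so in that sense the approaches coincide.

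What you add is a sketch of the descent argument behind that reference --- the exact sequence $0 \to M \to \Cl(Y) \to \Cl(X) \to 0$ and the isotypic decomposition of $\pi_*\sO_X$ --- which is correct in outline and more informative than the bare citation. Your own assessment of the delicate point is accurate: matching the abstract decomposition with the rigid multiplicative structure of the Cox ring is precisely what the machinery in \cite[\S\,1.4 and \S\,4.4]{ArzhantsevDerenthalHausenLaface2015} handles, and there is no shortcut around it. So your proposal is sound, and strictly more detailed than what the paper provides.
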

	
	From now on, let $X$ be a horospherical space $G/H$ so that $T^*X$ can be canonically identified with the homogeneous bundle $G \times_H \mathfrak{h}^{\perp}$, where $\mathfrak{h}^{\perp} \subset \mathfrak{g}^*$ is the annihilator of $\mathfrak{h}$. Let $P\coloneqq N_G(H)$ be the associated parabolic subgroup. Then the quotient $P/H$ is a torus that acts on $G/H$ by right multiplication, which is free and transitive over the fibers $G/H\rightarrow G/P$. Then we have a $G$-equivariant map
	\[
	\phi \colon T^*X \cong G \times_H \mathfrak{h}^{\perp} \longrightarrow G \times_P \mathfrak{h}^{\perp} \eqqcolon E,
	\]
	where $E$ is a homogeneous bundle over the homogeneous space $G/P$. 
	
	Recall that a $\bK$-algebra $R$ has \emph{rational singularities} if it is finitely generated and the associated affine variety $\Spec R$ has rational singularities. 
	
	\begin{lem} \label{l.PE}
		The Cox ring $\mathcal{R}(\mathbb{P} E^*)$ is a unique factorization domain (UFD) and has rational singularities.
	\end{lem}
	
	\begin{proof}
		By \cite[Lemma 5.3]{FuLiu2025}, the projective manifold $\mathbb{P} E^*$ is a weak Fano manifold and hence is of Fano type by Example \ref{ex.WeakFano}, then it follows from \cite[Theorem 1.1]{GongyoOkawaSannaiTakagi2015} that $\cR(\bP E^*)$ is finitely generated and $\Spec(\cR(\bP E^*))$ has klt singularities, therefore $\cR(\bP E^*)$ has rational singularities by \cite[Theorem 5.22]{KollarMori1998}. 
		
		Finally, note that the divisor class group of $\bP E^*$
        is a finitely generated free abelian group, so \cite[Corollary 1.2]{ElizondoKuranoWatanabe2004} implies that $\cR(\bP E^*)$ is a UFD.
	\end{proof}

	 The induced $(P/H)$-action on $T^*X$ yields a free and transitive $(P/H)$-action on the fibers of 
	\begin{equation}
		\label{eq.QuotientMap}
		\overline{\phi}\colon \bP T_X \longrightarrow \bP E^*.
	\end{equation}
	Theorem \ref{t.MainThmHorospherical} can easily be derived from the following result.
	
	\begin{prop}
		\label{prop.GeneralHoro}
		Let $G$ be a connected reductive group, and let $X = G/H$ be a horospherical  space.
		\begin{enumerate}
			\item\label{i1.fg} The $\bK$-algebra $S(X)$ has rational singularities.
			
			\item\label{i2.factorial} The affinization $\overline{T^*X}$ is factorial if $\Cl(X) = 0$ and $\mathbb{Q}$-factorial if $\Cl(X)$ is finite.
		\end{enumerate}
	\end{prop}
	
	\begin{proof}
		Let $\pi\colon \bP T_X\rightarrow X$ be the natural projection. Note that the divisor class group $\Cl(\bP T_X)\cong \pi^*\Cl(X)\oplus \bZ\sO_{\bP T_X}(1)$ is finitely generated, because $\Cl(X)$ is finitely generated (see \cite[Corollary 4.5.14]{ArzhantsevDerenthalHausenLaface2015}).
		
		Firstly, we assume that $H^0(X,\sO_X^{\times})\not=\bK^{\times}$. Then there exists an $G$-equivariant isomorphism $X\cong X'\times A$, where $A$ is a torus with trivial $G$-action and $X'$ is a horospherical $G$-variety with $H^0(X',\sO_X^{\times})=\bK^{\times}$. Then $\Cl(X)\cong \Cl(X')$ and the K\"unneth formula gives an isomorphism $S(X) \cong S(X')\otimes_{\bK} S(A)$, thus yielding
		\[
		\overline{T^*X} \cong \overline{T^*X'} \times \overline{T^*A} = \overline{T^*X'} \times T^*A \cong \overline{T^*X'}\times A \times \bA^r,
		\]
		where $r=\dim A$. Consequently, the $\bK$-algebra $S(X)$ is finitely generated (resp. has rational singularities) if and only if $S(X')$ is finitely generated (resp. has rational singularities). Moreover, the natural isomorphism $\Cl(\overline{T^*X})\cong \Cl(\overline{T^*X'})$ then implies that $\overline{T^*X}$ is factorial (resp.\ $\bQ$-factorial) if and only if $\overline{T^*X'}$ is. 
		
		 Thanks to the above arguments, we may assume that $H^0(X,\sO_X^{\times})=\bK^{\times}$, after replacing $X$ by $X'$ if necessary, so that the Cox ring $\cR(\bP T_X)$ is well-defined. Then applying Theorem \ref{t.Coxring-torusquotient} to the geometric quotient $\overline{\phi}\colon \mathbb{P} T_X \rightarrow \mathbb{P} E^*$ in \eqref{eq.QuotientMap} under the $(P/H)$-action yields a graded isomorphism 
		\[
		\mathcal{R}(\mathbb{P} T_X) \cong \mathcal{R}(\mathbb{P} E^*).
		\]
		In particular, since the cokernel of $\mathbb{Z} \sO_{\mathbb{P} T_X}(1) \rightarrow \Cl(\mathbb{P} T_X)$ is naturally isomorphic to $\Cl(X)$, by \cite[Example 1.2.3.4]{ArzhantsevDerenthalHausenLaface2015}, the characteristic quasitorus $\bT \coloneqq \Spec(\bK[\Cl(X)])$ acts naturally on $\mathcal{R}(\mathbb{P} T_X)$ such that
		\[
		\mathcal{R}(\mathbb{P} T_X)^{\bT} = \bigoplus_{p \geq 0} H^0(\mathbb{P} T_X, \sO_{\mathbb{P} T_X}(p)) \cong \bigoplus_{p \geq 0} H^0(X, \aS^p T_X) = S(X).
		\]
		It then follows from Lemma \ref{l.PE} and Boutot's theorem \cite{Boutot1987} that $S(X)$ has rational singularities, proving the statement \ref{i1.fg}.
		
		Finally, if $\Cl(X) = 0$, then the quasi-torus $\bT$ is trivial so that $S(X) = \mathcal{R}(\mathbb{P} E^*)$ is a UFD, implying that $\overline{T^*X}$ is factorial. If $\Cl(X)$ is finite, $\bT$ is also finite, then it follows from \cite[Lemma 5.16]{KollarMori1998} that $\overline{T^*X}$ is $\mathbb{Q}$-factorial. The statement \ref{i2.factorial} follows.
	\end{proof}
	
	We require the following criterion, which follows from \cite{Namikawa2001}.
	
	\begin{prop}
		\label{prop.symp=rat}
		Let $X$ be a quasi-affine smooth variety. Then $\overline{T^*X}$ is symplectic if and only if $S(X)$ has rational singularities.
	\end{prop}
	
	\begin{proof}
		The ``only if'' direction follows from \cite[Proposition 1.3]{Beauville2000a}. For the ``if'' direction, note that $T^*X$ is quasi-affine, so the natural morphism $T^*X\rightarrow \overline{T^*X}$ is an open embedding such that $\codim(\overline{T^*X}\setminus T^*X)\geq 2$ by Theorem \ref{Thm.Grosshans-SmallBoundary}. In particular, the canonical symplectic form $\omega$ on $T^*X$ extends to a symplectic form on $(\overline{T^*X})_{\reg}$, which implies that the canonical divisor of $\overline{T^*X}$ is trivial and hence Cartier. Then \cite[Corollaries 5.24 and 5.25]{KollarMori1998} show that $\overline{T^*X}$ is Cohen-Macaulay and hence has rational Gorenstein singularities. The result now follows from \cite[Theorem 6]{Namikawa2001}.
	\end{proof}
	
	\begin{proof}[Proof of Theorem \ref{t.MainThmHorospherical}]
		Since $G/H$ is a quasi-affine horospherical space, the result follows from Propositions \ref{prop.GeneralHoro} and \ref{prop.symp=rat}. 
	\end{proof}

	\begin{proof}[Proof of Corollary \ref{c.U_P/[P,P]-Cotangent}]
		Let $X=G/[P,P]$. Recall from \cite[Proposition 3.2]{KnopKraftVust1989} that there exists an exact sequence
		\[
		0=\fX([P,P]) \longrightarrow \Cl(X) \longrightarrow \Cl(G),
		\]
		where $\fX([P,P])$ is the character group of $[P,P]$. It is known that $\Cl(G)$ is always finite (see \cite[Theorem 4.2.2.1]{ArzhantsevDerenthalHausenLaface2015}), and $\Cl(G)$ is trivial if $[G,G]$ is simply connected by \cite[Corollary]{Popov2023}. Now the result follows from Theorem \ref{t.MainThmHorospherical} and Proposition \ref{prop.GeneralHoro}.
	\end{proof}

\section{Cones of highest weight vectors}
	
	\subsection{HV cones}
	\label{ss.HVcones}
	
	Let $G$ be a connected reductive group. Let $\chi$ be a dominant weight of $G$ with $V_\chi$ being the simple $G$-module with highest weight $\chi$. Let $v\in V_\chi^*$ be a highest weight vector in the dual of $V_\chi$. Recall that the Zariski closure $C_\chi\coloneqq \overline{G\cdot v}\subset V_\chi^*$ is called the \emph{cone of highest weight vectors (HV cone)}. 
	
	The projectivization of $C_\chi$ is a rational homogeneous space $Z\coloneqq G/P_\chi\subset \bP(V_\chi)$, where $P_\chi$ is the stabilizer of $[v]\in \bP(V_\chi)$; that is, $C_\chi$ is the affine cone $\widehat{Z}$ of $Z$. Moreover, it is known that the punctured cone 
	\[
	X\coloneqq \widehat{Z}\setminus \{0\}=C_\chi\setminus\{0\}
	\]
	is exactly $G\cdot v\subset V_\chi^*$, which is a horospherical space. Then Theorem \ref{t.MainThmHorospherical} implies that $\overline{T^*X}$ is a symplectic variety. In particular, it follows that $\overline{T^*\cO_{\min}}$ is a symplectic variety for the minimal nilpotent orbit $\cO_{\min}$ in a simple Lie algebra, and it seems plausible that this property also holds for all nilpotent orbits in a semi-simple Lie algebra.
	
	\subsection{Associated varieties}

	Let $G$ be a connected simple group, and let $P \subset G$ be a maximal parabolic subgroup containing a Borel subgroup $B$. Let $\mathfrak{h} \subset \mathfrak{b}$ be the Cartan subalgebra. Let $\mathfrak{g} = \mathfrak{n}^+ \oplus \mathfrak{h} \oplus \mathfrak{n}^-$ be the usual triangular decomposition relative to the choice of $\mathfrak{h}$ and $\mathfrak{b} = \mathfrak{h} \oplus \mathfrak{n}^+$. Write $\mathfrak{p} = \mathfrak{l} \oplus \mathfrak{u}^+$, where $\mathfrak{u}^+$ is the nilpotent radical of $\mathfrak{p}$ and $\mathfrak{l}$ is the Levi part. Thus, $\mathfrak{g} = \mathfrak{u}^+ \oplus \mathfrak{l} \oplus \mathfrak{u}^-$. We will consider certain cases where $\mathfrak{u}^+$ is abelian, summarized in the following table (see \cite[Table 3.1]{LevasseurSmithStafford1988} and \cite[Lemmas, p.~32 and p.~37]{LevasseurStafford1989}).
	
	{\tiny
		\begin{longtable}{llp{2.3cm}lll}
			\caption{}\label{table.Abelian}\\
			\hline
			\text{Type of $\mathfrak{g}$} 
			& \text{Dynkin diagram} 
			& parabolics $\mathfrak{p}$ with abelian radical
			& nilpotent orbit $\mathcal{O}$
			& $\dim(\mathcal{O})$
			& $\dim(X)$ \\
			\hline
			\endfirsthead
			\multicolumn{6}{l}{{ {\bf \tablename\ \thetable{}} \textrm{-- continued from previous page}}}
			\\
			\hline 
			\text{Type of $\mathfrak{g}$} 
			& \text{Dynkin diagram} 
			& parabolics $\mathfrak{p}$ with abelian radical
			& nilpotent orbit $\mathcal{O}$
			& $\dim(\mathcal{O})$
			& $\dim(X)$
			\endhead
			\hline
			\hline \multicolumn{6}{r}{{\textrm{Continued on next page}}} \\ \hline
			\endfoot
			
			\hline \hline
			\endlastfoot
			$A_n$ $(n \geq 1)$ 
			& $\dynkin[labels={1,2, ,n}] A{*2.*2}$
			& $\mathfrak{p}_i$ $\left(2 \leq i \leq \frac{n+1}{2}\right)$ 
			& $\mathcal{O}_r$ $(1 \leq r \leq i-1)$ 
			& $2r(n - r + 1)$
			& $r(n - r + 1)$\\
			$B_n$ $(n \geq 3)$ 
			& $\dynkin[labels={1,2,,n}] B{*2.*2}$
			& $\mathfrak{p}_1$ 
			& $\mathcal{O}_{\min}$ 
			& $4n - 4$ 
			& $2n - 2$ \\
			$C_n$ $(n \geq 2)$
			& $\dynkin[labels={1,2,,n}] C{*2.*2}$
			& $\mathfrak{p}_n$
			& $\mathcal{O}_r$ $(1 \leq r \leq n - 1)$ 
			& $r(2n - r + 1)$ 
			& $nr - \frac{r(r - 1)}{2}$ \\
			$D_n$ $(n \geq 4)$
			& $\dynkin[labels={1,2,,n-1,n}] D{*2.*3}$
			& $\mathfrak{p}_1$, $\mathfrak{p}_{n-1}$, $\mathfrak{p}_n$
			& $\mathcal{O}_{\min}$
			& $4n - 6$
			& $2n - 3$ \\
			$D_n$ $(n \geq 4)$
			& $\dynkin[labels={1,2,,n-1,n}] D{*2.*3}$
			& $\mathfrak{p}_n$
			& $\mathcal{O}_{r}$ $\left(2 \leq r \leq \frac{n - 2}{2}\right)$
			& $2r(2n - 2r - 1)$
			& $r(2n - 2r - 1)$ \\
			
			$E_6$ 
			& $\dynkin[labels={1,...,6}] E6$
			& $\mathfrak{p}_1$, $\mathfrak{p}_6$
			& $\mathcal{O}_{\min}$  
			& $22$
			& $11$ \\
			$E_7$
			& $\dynkin[labels={1,...,7}] E7$
			& $\mathfrak{p}_7$
			& $\mathcal{O}_{\min}$
			& $34$
			& $17$ \\
			\hline
		\end{longtable}
	}
	
	The extra notation in Table \ref{table.Abelian} is as follows. In each case, $\mathfrak{p}_i$ is the maximal parabolic subalgebra of $\mathfrak{g}$ associated to the simple root $\alpha_i$ (in the order of Bourbaki), and $\mathcal{O}_{\min}$ denotes the minimal non-zero nilpotent orbit in $\mathfrak{g}$. In Cases $A_n$, $C_n$, and $D_n$, the variety $\mathcal{O}_r$ is a nilpotent orbit in $\mathfrak{g}$ defined as follows (see \cite[II, \S\,5 and \S\,6]{LevasseurStafford1989}):
	\[
	\mathcal{O}_r \coloneqq \{z \in \mathfrak{g} \mid \rank(z) = \widetilde{r} \text{ and } z^2 = 0\},
	\]
	where $\widetilde{r} = r$ in Cases $A_n$ and $C_n$, but $\widetilde{r} = 2r$ in Cases $D_n$. Once both $\mathfrak{p}$ and $\mathcal{O}$ are fixed, we define $X$ to be $\mathcal{O} \cap \mathfrak{u}^+$.
	
	Now consider the $\mathbb{G}_m$-action on $\mathfrak{g}$ with weight $0$ (resp. $1$ and $2$) on $\mathfrak{u}^+$ (resp. $\mathfrak{l}$ and $\mathfrak{u}^-$), namely $\phi: \mathbb{G}_m \times \mathfrak{g} \to \mathfrak{g}$ such that
	\[
	\phi_\lambda(u^+ + l + u^-) = u^+ + \lambda l + \lambda^2 u^-.
	\]
	
	\begin{lem} \label{l.Lie}
		For any $v, x, y \in \mathfrak{g}$, we have:
		\begin{enumerate}
			\item \label{l.Lie1} $[\phi_\lambda(x), \phi_\lambda(y)] = \lambda \phi_\lambda([x, y])$.
			
			\item \label{l.Lie2} $\kappa(\phi_\lambda(v), [\phi_\lambda(x), \phi_\lambda(y)]) = \lambda^3 \kappa(v, [x, y])$, where $\kappa$ is the Killing form.
			
			\item \label{l.Lie3} Any nilpotent orbit in $\mathfrak{g}$ is stable under this $\mathbb{G}_m$-action.
		\end{enumerate}
	\end{lem}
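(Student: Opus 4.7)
The three parts rest on the observation that $\phi_\lambda$ is essentially the adjoint action of a well-chosen cocharacter, rescaled by $\lambda$. Once this identification is made, each part reduces to a routine weight count, standard orthogonality for the Killing form, and Jacobson--Morozov.

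For (1), I would prove the identity by bilinearity, checking it on the six ordered pairs drawn from the decomposition $\mathfrak{g}=\mathfrak{u}^+\oplus\mathfrak{l}\oplus\mathfrak{u}^-$. Only two ingredients enter: the parabolic inclusions $[\mathfrak{l},\mathfrak{u}^\pm]\subseteq\mathfrak{u}^\pm$, $[\mathfrak{l},\mathfrak{l}]\subseteq\mathfrak{l}$, $[\mathfrak{u}^+,\mathfrak{u}^-]\subseteq\mathfrak{l}$, and the abelianness of $\mathfrak{u}^\pm$, which holds throughout Table~\ref{table.Abelian} (the abelianness of $\mathfrak{u}^-$ follows from that of $\mathfrak{u}^+$ via Killing duality). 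The diagonal pairs $[\mathfrak{u}^\pm,\mathfrak{u}^\pm]$ make both sides vanish, while each of the other four pairs is a one-line match between the $\lambda$-weight on the left (the sum of the weights of the two factors) and the $\lambda$-weight on the right (namely $1$ plus the weight of the component of $\mathfrak{g}$ in which the bracket lies).

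For (2), I would apply (1) to pull the outer $\lambda$ out and reduce to understanding how $\kappa$ transforms under $\phi_\lambda$. Since the Killing form satisfies $\kappa(\mathfrak{g}_\alpha,\mathfrak{g}_\beta)=0$ unless $\alpha+\beta=0$, and since the $\alpha_i$-coefficient of a root characterises membership in $\mathfrak{u}^+$, $\mathfrak{l}$, or $\mathfrak{u}^-$, the only surviving pieces of $\kappa$ live on $\mathfrak{u}^+\times\mathfrak{u}^-$, on $\mathfrak{u}^-\times\mathfrak{u}^+$, and on $\mathfrak{l}\times\mathfrak{l}$, each of which picks up a uniform factor of $\lambda^2$ under $\phi_\lambda$. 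Thus $\kappa(\phi_\lambda(a),\phi_\lambda(b))=\lambda^2\kappa(a,b)$, and combining with (1) gives exactly the $\lambda^3$ in the statement.

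For (3), I would exhibit $\phi_\lambda$ as a composition of group conjugation with scalar multiplication. Let $\omega_i^\vee$ be the fundamental coweight dual to the simple root $\alpha_i$ defining $\mathfrak{p}$; since $\mathfrak{u}^+$ is abelian, every root of $\mathfrak{u}^+$ has $\alpha_i$-coefficient exactly $1$, so $\Ad(\omega_i^\vee(\mu))$ has weights $\mu,1,\mu^{-1}$ on $\mathfrak{u}^+,\mathfrak{l},\mathfrak{u}^-$ respectively. Setting $\mu=\lambda^{-1}$ yields the identification $\phi_\lambda=\lambda\cdot\Ad(\omega_i^\vee(\lambda^{-1}))$, and the claim follows because both factors preserve every nilpotent orbit: the $\Ad$-factor by $G$-invariance, and the scalar factor by Jacobson--Morozov (the cocharacter integrating the semisimple element of an $\mathfrak{sl}_2$-triple through a nilpotent $y$ scales $y$ by every element of $k^\times$). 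The only real obstacle is organising the six-case weight bookkeeping in (1) without errors; everything else is immediate.
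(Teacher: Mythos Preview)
Your proposal is correct and follows essentially the same route as the paper: part (1) is the identical bilinearity-and-cases computation, and your explicit identification $\phi_\lambda=\lambda\cdot\Ad(\omega_i^\vee(\lambda^{-1}))$ for (3) is exactly what the paper means when it says $\psi_\lambda\coloneqq\lambda^{-1}\phi_\lambda$ ``acts via the adjoint action''. The one organisational difference is in (2): rather than invoking root-space orthogonality of $\kappa$ directly, the paper observes that (1) already says $\psi_\lambda$ is a Lie algebra automorphism, hence preserves $\kappa$ automatically, which yields $\kappa(\phi_\lambda(a),\phi_\lambda(b))=\lambda^2\kappa(\psi_\lambda(a),\psi_\lambda(b))=\lambda^2\kappa(a,b)$ without any further case analysis. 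This packaging via $\psi_\lambda$ unifies (2) and (3) into a single consequence of (1), whereas your argument treats them by separate (equally valid) weight counts.
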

	
	\begin{proof}
		Write $x = x^+ + x_0 + x^-$ (resp. $y = y^+ + y_0 + y^-$) in the decomposition of $\mathfrak{g} = \mathfrak{u}^+ \oplus \mathfrak{l} \oplus \mathfrak{u}^-$. Recall that $\mathfrak{u}^+$ and $\mathfrak{u}^-$ are both abelian, hence we have
		\begin{align*}
			[\phi_\lambda(x), \phi_\lambda(y)] &= [x^+ + \lambda x_0 + \lambda^2 x^-, y^+ + \lambda y_0 + \lambda^2 y^-] \\
			&= \lambda([x^+, y_0] + [x_0, y^+]) + \lambda^2([x^+, y^-] + [x_0, y_0] + [x^-, y^+]) \\
			&\quad + \lambda^3([x_0, y^-] + [x^-, y_0]) \\
			&= \lambda \phi_\lambda([x, y]).
		\end{align*}
		
		Now consider the map $\psi_\lambda = \frac{1}{\lambda} \phi_\lambda$. By \ref{l.Lie1}, $\psi_\lambda$ preserves the Lie bracket. This implies that $\psi_\lambda$ preserves the Killing form, hence we have
		\begin{align*}
			\kappa(\phi_\lambda(v), [\phi_\lambda(x), \phi_\lambda(y)]) &= \kappa(\phi_\lambda(v), \lambda \phi_\lambda([x, y])) \\
			&= \kappa(\lambda \psi_\lambda(v), \lambda^2 \psi_\lambda([x, y])) \\
			&= \lambda^3 \kappa(v, [x, y]).
		\end{align*}
		
		Since $\psi_\lambda$ preserves the Lie bracket, its action on $\mathfrak{g}$ is via the adjoint action, hence it preserves any adjoint orbits in $\mathfrak{g}$. As nilpotent orbits are stable under the dilation action of scalars, it follows that $\phi_\lambda$ preserves nilpotent orbits.
	\end{proof}

	\subsection{Proof of Theorem \ref{thm.coneIHSS}}
	Let $\mathfrak{g}$ be a simple Lie algebra with $\mathfrak{p}$, $\mathcal{O}$, and $X$ as in Table \ref{table.Abelian}. We recall that the affine closure of $X$ (resp. $\mathcal{O}$) is exactly the Zariski closure of $X$ (resp. $\mathcal{O}$) in $\mathfrak{u}^+$ (resp. $\mathfrak{g}$) according to \cite[Remarks 3.2]{LevasseurSmithStafford1988} and \cite[Lemmas, p.~32 and p.~37]{LevasseurStafford1989}. In particular, the Zariski closures of $X$ and $\0$ are both normal.

	\begin{thm}
		\label{thm.asscociatedvar}
		The affine closure $\overline{T^* X}$ is isomorphic to $\overline{\mathcal{O}} \subset \mathfrak{g}$.
	\end{thm}
	
	\begin{proof}
		For an element $v \in \mathcal{O}$, the tangent space $T_v \mathcal{O}$ is naturally identified with $[v, \mathfrak{g}]$. By definition, the symplectic form on $\mathcal{O}$ is given by 
		\[
		\omega_v([v, x], [v, y]) = \kappa(v, [x, y]).
		\]
		
		We now show that $\phi_\lambda^* \omega = \lambda \omega$ for all $\lambda \in \mathbb{G}_m$. Take any $v \in \mathcal{O}$ and $x, y \in \mathfrak{g}$; then by Lemma \ref{l.Lie}, we have
		\begin{align*}
			(\phi_\lambda^* \omega)_v([v, x], [v, y]) &= \omega_{\phi_\lambda(v)}(d \phi_\lambda([v, x]), d \phi_\lambda([v, y])) \\
			&= \omega_{\phi_\lambda(v)}\left([\phi_\lambda(v), \frac{1}{\lambda} \phi_\lambda(x)], [\phi_\lambda(v), \frac{1}{\lambda} \phi_\lambda(y)]\right) \\
			&= \kappa\left(\phi_\lambda(v), \frac{1}{\lambda^2} [\phi_\lambda(x), \phi_\lambda(y)]\right) \\
			&= \frac{1}{\lambda^2} \kappa(\phi_\lambda(v), [\phi_\lambda(x), \phi_\lambda(y)]) \\
			&= \lambda \kappa(v, [x, y]) \\
			&= \lambda \omega_v([v, x], [v, y]).
		\end{align*}
		
		Notice that $\dim X = \frac{1}{2} \dim \mathcal{O}$, and the $\mathbb{G}_m$-fixed locus is $\mathcal{O} \cap \mathfrak{u}^+ = X$. It follows that $X$ is a Lagrangian submanifold of $\mathcal{O}$. 
		For any $x \in X$, let $T_x^p \coloneqq \{v \in T_x \mathcal{O} \mid (\phi_\lambda)_*(v) = \lambda^p v\}$. In particular, $T_x^0 = T_x X$. Take any $v_1 \in T_x^p$ and $v_2 \in T_x^q$. Since $\phi_\lambda^* \omega = \lambda \omega$, we have
		\[
		\lambda \omega(v_1, v_2) = \phi_\lambda^* \omega(v_1, v_2) = \omega((\phi_\lambda)_* v_1, (\phi_\lambda)_* v_2) = \lambda^{p + q} \omega(v_1, v_2).
		\]
		Hence, if $p + q \neq 1$, then $\omega(v_1, v_2) = 0$. This shows that $T_x^0 \simeq T_x^1$ and $T_x^p = 0$ for $p \neq 0, 1$ as $\dim T_x^0 = \dim X =  \frac{1}{2} \dim \mathcal{O}$. Then we get a weight decomposition as follows:
		\begin{equation}
			\label{eq.Weight-Decomp}
			T {\mathcal{O}}|_X = T^0\oplus T^1 \cong  TX \oplus N_{X/\mathcal{O}}.
		\end{equation}
		Consider the map $\pi_+ \colon \mathfrak{g} \to \mathfrak{u}^+$, the linear projection along $\mathfrak{l} + \mathfrak{u}^-$, which is $\mathbb{G}_m$-equivariant sending a point $v$ to its $\mathbb{G}_m$-limit $\lim_{\lambda \to 0} \phi_\lambda(v)$. Then the image $\pi_+(\mathcal{O})$ is the closure $\overline{X}$ of $X$ in $\fu^+$. Denote by $W$ the pre-image $\pi_+^{-1}(X)$ in $\mathcal{O}$, which consists of points $v \in \mathcal{O}$ such that $\lim_{\lambda \to 0} \phi_\lambda(v) \in X$; so $W$ is a dense open subset of $\mathcal{O}$. 
		
		Now we apply the argument of \cite[Lemma 3.7]{Fu2003} to show that $W$ is isomorphic to $T^*X$. By the Bia{\l}ynicki-Birula decomposition \cite{BialynickiBirula1973}, the projection 
		\[
		\pi_+|_W\colon W \longrightarrow X
		\]
		is a vector bundle of rank $\dim X$, which is canonically isomorphic to the normal bundle of its zero section, thus $W$ is natually isomorphic to the normal bundle $N_{X/\cO}$ of $X$ in $\mathcal{O}$ because $W$ is an open subset of $\mathcal{O}$ containing $X$. On the other hand, since $X$ is Lagrangian, the symplectic form on $\mathcal{O}$ induces an isomorphism $T^*X\cong N_{X/\mathcal{O}}$ via the decomposition \eqref{eq.Weight-Decomp}, which then yields an isomorphism $W\cong T^*X$ as vector bundles.     
	\end{proof}
	
	\begin{rem}
		Pick $\mathfrak{g}$, $X$, and $\mathcal{O}$ in Table \ref{table.Abelian}. Let $U(\mathfrak{g})$ be the universal enveloping algebra of $\mathfrak{g}$, and let $\sD(X)$ be the algebra of differential operators on $X$. By \cite[Theorem 5.2]{LevasseurSmithStafford1988} and \cite[Theorem, p.~2]{LevasseurStafford1989}, there exists a completely prime, maximal ideal $J$ (depending on $\mathcal{O}$) of $U(\mathfrak{g})$ and an isomorphism $\psi \colon U(\mathfrak{g})/J \rightarrow \sD(X)$. The Poincar\'e--Birkhoff--Witt filtration on $U(\mathfrak{g})$ gives a graded ring:
		\[
		R \coloneqq \gr U(\mathfrak{g}) / \gr J = \aS^{\bullet} \mathfrak{g} / \gr J.
		\]
		Then the induced closed embedding $\Spec(R) \subset \mathfrak{g}^*$ is nothing but $\overline{\mathcal{O}} \subset \mathfrak{g}$ under the natural isomorphism $\mathfrak{g} \cong \mathfrak{g}^*$ induced by $\kappa$. On the other hand, the natural order filtration on $\sD(X)$ induces a graded morphism $\gr \sD(X) \rightarrow S(X) = k[T^*X]$. Then Theorem \ref{thm.asscociatedvar} implies that there exists a natural isomorphism $R \rightarrow S(X)$. This is a surprise to us because $\psi$ does not preserve the filtrations on each side (cf. \cite[IV, \S\,1.9]{LevasseurStafford1989}).
	\end{rem}

    Let $S$ be an irreducible Hermitian symmetric space (IHSS), and let $Z\subset \bP T^*_o S$ be the variety of lines in $X$ passing through a reference point $o\in S$. Then $Z$ is also a Hermitian symmetric space and the inclusion $Z\subset \bP T^*_o S$ is an equivariant embedding. In the following table, we collect the classification of IHSSs $S$, different from projective spaces, and the associated varieties $Z$, together with a description of the inclusion $Z\subset \bP T_o^*S$. Here in this table $\bQ^n$ stands for the $n$-dimensional smooth hyperquadric and $G/P_k$ stands for the rational homogeneous space associated to the maximal parabolic subgroup determined by the $k$-th simple root (in the ordering of Bourbaki) (see \cite[(2.1)]{HwangMok1998}).
    
    	\begin{longtable}{ccccccc}
    		\caption{}\label{Table.IHSS-Cone}
    		\\
    		\hline  
    		IHSS  & {\rm Gr}(a, a+b) & $D_n/P_n$   & $C_n/P_n$ &    $\mathbb{Q}^n$ & $E_6/P_1$ & $E_7/P_7 $ \\
    		\hline  
    		$Z$   &  $\bP^{a-1} \times \bP^{b-1}$ &     ${\rm Gr}(2, n)$    &$\bP^{n-1}$  & $\mathbb{Q}^{n-2}$ & $D_5/P_5$ & $E_6/P_1$  \\
    		\hline  
    		$Z \subset \bP T_o^* S$ & Segre & Pl\"ucker &$2^{\text{nd}}$ Veronese  & Hyperquadric  & Spinor  & Severi\\
    		\hline
    	\end{longtable}
	
	\begin{proof}[Proof of Theorem \ref{thm.coneIHSS}]
		For an IHSS $G/P$ different from projective spaces, we have the corresponding decomposition $\mathfrak{g} = \mathfrak{u}^+ \oplus \mathfrak{l} \oplus \mathfrak{u}^-$. The tangent space $T_{o}(G/P)$ at a base point $o$ is naturally identified with $\mathfrak{u}^-$.
		Then $Z\subset \bP T_o^*(G/P)$ is the highest weight variety in $\bP(\fu^-)^* \cong \bP (\mathfrak{u}^+)^*$, whose punctured cone is just the variety $X$ (see \cite[(5.1)]{HwangMok1998}). Now the result follows from Theorem \ref{thm.asscociatedvar}.
	\end{proof}

	\begin{rem}
		In Theorem \ref{thm.coneIHSS}, if $G/P$ is a projective space, i.e., $(\mathfrak{g}, \mathfrak{p}) = (\mathfrak{sl}(n+1), \mathfrak{p}_1)$, the argument in Theorem \ref{thm.asscociatedvar} remains valid. However, as $\dim(\mathcal{O} \setminus W) = \dim(\mathcal{O}) - 1$ at the final step (see \cite[Proposition 4.8]{LevasseurSmithStafford1988}), the natural inclusion $\bK[\mathcal{O}]\subset \bK[W]=\bK[T^*X]$ is strict.
	\end{rem}
	
	\begin{example} \label{e.quadric}
		Consider the $n$-dimensional hyperquadric cone $\widehat{\bQ}^n \subset \bA^{n+1}$ over a smooth projective hyperquadric in $\mathbb{P}^n$, $n\geq 2$. Let $X = \widehat{\bQ}^n \setminus \{0\}$; then $\overline{T^*X}$ is a symplectic variety of dimension $2n$, which is isomorphic to the minimal nilpotent orbit closure $\overline{\mathcal{O}}_{\min}$ in $\mathfrak{so}_{n+3}$ by Theorem \ref{thm.coneIHSS}. This generalizes a result proven in \cite{Jia2025a} for $n=5$ by another method (cf. \cite{Jia2025}).
	\end{example}

\section{Parabolic basic affine spaces}

In this section, we apply the idea of \S\,\ref{s.Horospherical} to investigate the affine closure of the cotangent bundles of parabolic basic affine spaces, proving Theorem \ref{t.G/UP} and providing some evidence for Conjecture \ref{conj.PEFanoType}.

Let $G$ be a connected reduction group. Then there exists a simply connected semisimple group $G^{\ssc}$, a torus $A$ and a closed finite central subgroup $K$ such that $G$ can be written as a quotient 
\[
(G^{\ssc}\times A)/K \stackrel{\cong}{\longrightarrow} G.
\]

Let $P$ be a parabolic subgroup of $G$ with $U_P$ its unipotent radical. Then the parabolic basic affine space $G/U_P$ is quasi-affine. Moreover, let $U_{P^{\ssc}}$ be the unipotent radical of a parabolic subgroup $P^{\ssc}$ of $G^{\ssc}$ such that $P^{\ssc}\times A$ projects into $P$ under the quotient map. Then $G^{\ssc}/U_{P^{\ssc}}\times A\rightarrow G/U_{P}$ is an \'etale Galois cover so that the induced morphism
\begin{equation}
\label{eq.SC-Reduction}
    T^*(G^{\ssc}/U_{P^{\ssc}}) \times T^*A \longrightarrow T^*(G/U_{P})
\end{equation}
is again an \'etale Galois cover. In this case, we have the following:

\begin{lem}
	\label{lem.sc}
	\begin{enumerate}
		\item\label{i1.lem.sc} The $\bK$-algebra $S(G/U_P)$ is finitely generated if and only if the $\bK$-algebra $S(G^{\ssc}/U_{P^{\ssc}})$ is finitely generated.
		
		\item\label{i2.lem.sc} The affine closure $\overline{T^*(G/U_P)}$ has symplectic singularities if and only if $\overline{T^*(G^{\ssc}/U_{P^{\ssc}})}$ has symplectic singularities.
	\end{enumerate}
\end{lem}

\begin{proof}
	The statement \ref{i1.lem.sc} follows from the \'etale Galois cover \eqref{eq.SC-Reduction}, Proposition \ref{prop.FiniteCover} and the K\"unneth formula (cf. Proof of Proposition \ref{prop.GeneralHoro}).

	For the statement \ref{i2.lem.sc} we can assume that $S(G^{\ssc}/U_{P^{\ssc}})$ and $S(G/U_P)$ are finitely generated by \ref{i1.lem.sc}, then the induced morphism
	\[
	W\coloneqq \overline{T^*(G^{\ssc}/U_{P^{\ssc}})}\times T^*A \cong \overline{T^*(G^{\ssc}/U_{P^{\ssc}})\times T^*A} \longrightarrow \overline{T^*(G/U_P)}\eqqcolon Z
	\]
	is a finite Galois cover by Proposition \ref{prop.FiniteCover}. In particular, according to Proposition \ref{prop.symp=rat}, it suffices to prove that $Z$ has rational singularities if and only if $\overline{T^*(G^{\ssc}/U_{P^{\ssc}})}$ has rational singularities. 
	
	The ``if'' direction follows from Boutot's theorem \cite{Boutot1987}. For the ``only if'' direction, suppose that $Z$ is symplectic. Thus $Z$ has canonical singularities by \cite[Corollary 5.24]{KollarMori1998} and then \cite[Proposition 5.20]{KollarMori1998} implies that $(W,\Delta)$ has klt singularities for some effective $\bQ$-divisor $\Delta$ on $W$. Consequently, \cite[Theorem 5.22]{KollarMori1998} implies that $W$ has rational singularities and therefore $\overline{T^*(G^{\ssc}/U_{P^{\ssc}})}$ also has rational singularities. 
\end{proof}

Let $B$ be the Borel subgroup of $G$ contained in $P$ and let $\fu_P^{\perp}\subset \fg^*$ be the annihilator of $\fu_P$. Then the cotangent bundle $T^*(G/U_P)$ can be naturally identified to the geometric quotient
\[
(G\times \fu_P^{\perp})/U_P\coloneqq G\times_{U_P} \fu_P^{\perp}
\]
where $U_P$ acts on $G$ by right multiplication and on $\fu_P^{\perp}$ by the coadjoint action. The ring $S(G/U_P)$ is isomorphic to the $U_P$-invariant subring $\bK[G \times \fu_P^{\perp}]^{U_P}$, and $\overline{T^*(G/U_P)}$ is isomorphic to the invariant-theoretic quotient $(G \times \fu_P^{\perp}) /\!\!/ U_P$. We require the following result.

\begin{prop}[\protect{\cite[Theorem 16.2]{Grosshans1997}}]
	\label{t.QuobytU_P}
	Let $G$ be a connected reductive group, and let $U_P$ be the unipotent radical of a parabolic subgroup $P$ of $G$. Let $R$ be a $\bK$-algebra on which $G$ acts rationally. Then $R$ is finitely generated (resp. integrally closed, or has rational singularities) if and only if $R^{U_P}$ has the same property.
\end{prop}

Let $M$ be the homogeneous bundle $G \times_{U_B} \fu_P^{\perp}$ over $G / U_B$.

   \begin{lem}
   	\label{lem.S(X_P)=k[M]}
   	The $\bK$-algebra $S(G/U_P)$ is finitely generated (resp. has rational singularities) if and only if $\bK[M]$ is so.
   \end{lem}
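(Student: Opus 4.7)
The plan is to realize $k[M]$ as a subring of invariants inside $S(X_P)$ under the action of a unipotent subgroup of the Levi factor of $P$, and then invoke Theorem \ref{t.QuobytU_P}. Let $L$ denote the Levi subgroup of $P$ containing the maximal torus $T \subset B$, and let $B_L \coloneqq B \cap L$ be the corresponding Borel subgroup of $L$, with unipotent radical $U_L$. Since $B \subset P$, the subgroup $U_P$ is contained in $U_B$ and normal in $B$, and the Levi decomposition of $B$ gives $U_B = U_L \ltimes U_P$; in particular $U_B/U_P \cong U_L$.

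Next, because $U_B$ acts freely on $G$ by right multiplication, it acts freely on $G \times \fu_P^{\perp}$ (diagonally, with the coadjoint action on the second factor), and $M$ is the corresponding geometric quotient. Taking invariants in stages along the normal subgroup $U_P \subset U_B$ yields
\[
k[M] \;=\; k[G \times \fu_P^{\perp}]^{U_B} \;=\; \bigl(k[G \times \fu_P^{\perp}]^{U_P}\bigr)^{U_L} \;=\; S(X_P)^{U_L},
\]
where the last equality uses the identification $S(X_P) = k[T^*X_P] = k[G \times \fu_P^{\perp}]^{U_P}$ already recalled in the excerpt.

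To finish, observe that the residual action of $L = P/U_P$ on $S(X_P)$ is rational, being inherited from the rational $P$-action on $k[G \times \fu_P^{\perp}]$. Since $L$ is connected reductive and $B_L$ is a parabolic subgroup of $L$ whose unipotent radical is $U_L$, Theorem \ref{t.QuobytU_P} applied to the $L$-action on $S(X_P)$ gives that $S(X_P)$ is finitely generated (resp.\ has rational singularities) if and only if $S(X_P)^{U_L} = k[M]$ is. The main point requiring care is purely group-theoretic, namely the decomposition $U_B = U_L \ltimes U_P$ together with the rationality of the residual $L$-action; both are standard consequences of the Levi decomposition, so once Theorem \ref{t.QuobytU_P} is in hand the statement follows essentially for free.
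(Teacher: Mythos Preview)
Your proof is correct and follows essentially the same route as the paper's: both use the Levi decomposition to write $U_B = U_P \rtimes U_L$, take invariants in stages to identify $k[M] = k[G\times\fu_P^{\perp}]^{U_B} = S(X_P)^{U_L}$, and then apply Theorem~\ref{t.QuobytU_P} to the residual $L$-action on $S(X_P)$. The only cosmetic difference is that you make the rationality of the $L$-action and the identification $U_L = U_{B_L}$ slightly more explicit.
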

   
   \begin{proof}
   	Let $P = U_P L_P$ be the Levi decomposition. Since $P$ is contained in the normalizer of $U_P$ in $G$, the natural $U_P$-action on $G \times \fu_P^{\perp}$ extends to a $P$-action, and thus the reductive group $L_P$ acts naturally on $G \times \fu_P^{\perp}$. Denote by $U_B$ the unipotent radical of $B$ and by $U_L$ a unipotent subgroup of $L_P$ such that $U_B = U_P \rtimes U_L$. Then we have
   	\[
   	S(G/U_P)^{U_L} \cong (\bK[G \times \fu_P^{\perp}]^{U_P})^{U_L} = \bK[G \times \fu_P^{\perp}]^{U_B},
   	\]
   	where the $U_L$-action on $S(G/U_P)$ is induced by the natural $L_P$-action on $T^*(G/U_P)$. Notice that $(G \times \fu_P^{\perp}) /\!\!/ U_B$ is just the affine closure $\overline{M}$ of $M$. Now applying Proposition \ref{t.QuobytU_P} to $L_P$ and $S(G/U_P)$ yields the desired result.
   \end{proof}
   
   Let $E$ be the homogeneous vector bundle $G \times_B \fu_P^{\perp}$ over $G/B$.
   
   \begin{lem}
   	\label{lem.CoxPE=CoxPM}
   	There exists a graded isomorphism $\mathcal{R}(\mathbb{P} E^*) \cong \mathcal{R} (\mathbb{P} M^*)$.
   \end{lem}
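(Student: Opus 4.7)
The plan is to exhibit $\mathbb{P} E^*$ as the geometric quotient of $\mathbb{P} M^*$ by a free action of the torus $T = B/U_B$, and then to invoke Theorem \ref{t.Coxring-torusquotient} to transfer the Cox ring across this quotient. This mirrors exactly the role played by the quotient $\mathbb{P} T_X \to \mathbb{P} E^*$ in the proof of Theorem \ref{t.GeneralHoro}.

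First I would describe the action. The right $B$-action on $G \times \fu_P^\perp$ defined by $(g,v) \cdot b = (gb, b^{-1} \cdot v)$ is well defined because $B \subset P$ normalizes $\fu_P$. Passing to the $U_B$-quotient produces a residual right action of $T$ on $M$ whose geometric quotient is $E$, and projectivizing the dual gives a right $T$-action on $\mathbb{P} M^*$ whose geometric quotient is $\mathbb{P} E^*$. I would verify freeness by observing that if $t \in T$ fixes a point of $\mathbb{P} M^*$, then $t$ must already fix its image in $G/U_B$, which forces $t \in T \cap U_B = \{e\}$.

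Next I would verify the two global hypotheses of Theorem \ref{t.Coxring-torusquotient} for $\mathbb{P} M^*$. Since $\mathbb{P} M^*$ is a smooth projective bundle over $G/U_B$, pulling back along the projection yields $H^0(\mathbb{P} M^*, \sO^\times) = H^0(G/U_B, \sO^\times) = k^\times$, where the second equality follows from Lemma \ref{l.invertible-functions} together with $\bX(G) = 0$ for the semisimple group $G$. For the class group, the projective bundle formula gives $\Cl(\mathbb{P} M^*) \cong \Cl(G/U_B) \oplus \mathbb{Z}$, and since $\bX(U_B) = 0$, Lemma \ref{l.Picard-group} identifies $\Cl(G/U_B)$ with a subgroup of the finite group $\Cl(G)$; hence $\Cl(\mathbb{P} M^*)$ is finitely generated. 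Theorem \ref{t.Coxring-torusquotient} then delivers the desired graded isomorphism $\mathcal{R}(\mathbb{P} E^*) \cong \mathcal{R}(\mathbb{P} M^*)$. I do not anticipate any serious obstacle: the whole argument is a routine application of the machinery that already drove the proof of Theorem \ref{t.GeneralHoro}.
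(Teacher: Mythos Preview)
Your proposal is correct and follows essentially the same approach as the paper: identify $\mathbb{P} M^* \to \mathbb{P} E^*$ as the geometric quotient by the free torus action of $T = B/U_B$, then apply Theorem~\ref{t.Coxring-torusquotient}. The paper's own proof is terser and does not spell out the verification of the hypotheses (freeness, $H^0(\mathbb{P} M^*,\sO^\times)=k^\times$, finite generation of $\Cl(\mathbb{P} M^*)$), so your added detail is welcome; the only cosmetic slip is writing ``$T \cap U_B = \{e\}$'' when $T$ is a quotient of $B$, but the intended argument (the $T$-action on $G/U_B$ is free because any lift of a fixing element lies in $U_B$) is clear and correct.
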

   
   \begin{proof}
   	Let $\mathbb{T}$ be the torus $B / U_B$. Then the natural projection
   	\[
   	M \coloneqq  G \times_{U_B} \fu_P^{\perp} \longrightarrow G \times_B \fu_P^{\perp}\eqqcolon E
   	\]
   	is exactly the geometric quotient of $M$ by the natural induced $\mathbb{T}$-action on $M$. Let $\mathbb{P} M^*$ and $\mathbb{P} E^*$ be the projectivizations of dual bundles. The induced morphism $q \colon \mathbb{P} M^* \rightarrow \mathbb{P} E^*$ is the geometric quotient by the induced $\mathbb{T}$-action on $\mathbb{P} M^*$. The statement can then be derived immediately from Theorem \ref{t.Coxring-torusquotient}.
   \end{proof}
   
   \begin{proof}[Proof of Theorem \ref{t.G/UP}]
   	By Lemma \ref{lem.sc}, we may assume that $G$ is simply connected and semisimple so that $\Cl(G)$ is trivial, then \cite[Proposition 3.2]{KnopKraftVust1989} implies that $\Cl(G/U_P)$ is trivial as $\fX(U_P)=0$. Now we have the following isomorphisms:
   	\begin{align*}
   		\bigoplus_{p \geq 0} H^0(\mathbb{P} M^*,\sO_{\mathbb{P} M^*}(p)) 
   		& \cong \mathcal{R}(\mathbb{P} M^*)
   		& \text{since } \operatorname{Cl}(\mathbb{P} M^*) \cong \mathbb{Z} \sO_{\mathbb{P} M^*}(1) \\
   		& \cong \mathcal{R}(\mathbb{P} E^*)
   		& \text{by Lemma } \ref{lem.CoxPE=CoxPM}.
   	\end{align*}
   	In particular, combining Lemma \ref{lem.S(X_P)=k[M]} with the following isomorphism
   	\[
   	\bK[M] \cong \bigoplus_{p \geq 0} H^0(G / U_B, \aS^p M^*) \cong \bigoplus_{p \geq 0} H^0(\mathbb{P} M^*,\sO_{\mathbb{P} M^*}(p)).
   	\]
   	shows that $S(G/U_P)$ is finitely generated (resp. has rational singularities) if and only if $\cR(\bP E^*)$ is finitely generated (resp. has rational singularities), therefore the statement \ref{i1.t.G/UP} follows by Definition \ref{defn.MDS-Fano}.
   	
   	For the statement \ref{i2.t.G/UP}, it remains to prove that $\cR(\bP E^*)$ has rational singularities if and only if $\bP E^*$ is of Fano type. The ``if'' direction follows from \cite[Theorem 1.1]{GongyoOkawaSannaiTakagi2015} and \cite[Theorem 5.22]{KollarMori1998}. For the ``only if'' part, suppose that $\cR(\bP E^*)$ has rational singularities. Notice that $\Cl(\bP E^*)$ is a finitely generated free abelian group so that $\cR(\bP E^*)$ is a UFD by \cite[Corollary 1.2]{ElizondoKuranoWatanabe2004}. Then it follows from \cite[Corollary 5.24]{KollarMori1998} that $\Spec (\cR(\bP E^*))$ has canonical singularities and hence $\bP E^*$ is of Fano type by \cite[Theorem 1.1]{GongyoOkawaSannaiTakagi2015}.
   \end{proof}

   Let $F = G \times_P \fu_P^{\perp}$. Then there exists a natural fibration $\mathbb{P} E^* \rightarrow \mathbb{P} F^*$. In particular, if $\mathbb{P} E^*$ is of Fano type, then $\mathbb{P} F^*$ is also of Fano type by \cite[Corollary 1.3]{GongyoOkawaSannaiTakagi2015}. As evidence for Conjecture \ref{conj.PEFanoType}, we confirm this expectation in the following.
   
   \begin{prop} \label{p.weakfano}
   	The projectivization $\mathbb{P} F^*$ is a weak Fano manifold.
   \end{prop}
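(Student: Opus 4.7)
The plan is to express $-K_{\mathbb{P} F^*}$ as a positive multiple of the tautological class $\zeta \coloneqq \sO_{\mathbb{P} F^*}(1)$, and then to show that $\zeta$ is big and nef. The main obstacle will be bigness, which I would reduce to checking that the natural moment map $F\to \mathfrak{g}^*$ is dominant.

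First, for the canonical bundle calculation, the idea is to dualize the $P$-equivariant sequence $0\to \mathfrak{p}/\mathfrak{u}_P\to \mathfrak{g}/\mathfrak{u}_P\to \mathfrak{g}/\mathfrak{p}\to 0$ to obtain $0\to \mathfrak{p}^\perp \to \mathfrak{u}_P^\perp \to \mathfrak{l}_P^*\to 0$, and then apply $G\times_P-$ to get an exact sequence $0\to T^*(G/P)\to F\to G\times_P \mathfrak{l}_P^*\to 0$ on $G/P$. Since the adjoint action of $L_P$ on $\mathfrak{l}_P$ has trivial determinant ($Z(L_P)^0$ acts trivially via $\Ad$, and $[L_P,L_P]$ admits no nontrivial characters), the quotient bundle has trivial determinant, so $\det F \cong K_{G/P}$. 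Combining this with the standard formula $\omega_{\mathbb{P} F^*/G/P} = \sO(-r)\otimes \pi^*\det F^*$ for $r = \rank F$ yields $-K_{\mathbb{P} F^*} = r\zeta$. For nefness, the $P$-equivariant surjection $\mathfrak{g}\twoheadrightarrow \mathfrak{g}/\mathfrak{u}_P$ induces a surjection of $G$-homogeneous bundles $\sO_{G/P}\otimes \mathfrak{g}\twoheadrightarrow F^*$, so $F^*$ is globally generated and $\zeta$ is nef.

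The same surjection defines a morphism $\overline{\mu}\colon \mathbb{P} F^*\to \mathbb{P}(\mathfrak{g})$ with $\overline{\mu}^*\sO_{\mathbb{P}(\mathfrak{g})}(1) = \zeta$, which is the projectivization of the moment map $\mu\colon F\to \mathfrak{g}^*$, $[g,v]\mapsto \Ad^*(g)v$. Since $\dim \mathbb{P} F^* = \dim G - 1 = \dim \mathbb{P}(\mathfrak{g})$, once we know $\overline{\mu}$ is dominant it is automatically generically finite, so $\zeta = \overline{\mu}^*\sO(1)$ is big. The key remaining point is thus $G\cdot \mathfrak{u}_P^\perp = \mathfrak{g}^*$, equivalent via the $G$-equivariant Killing isomorphism $\mathfrak{g}\cong \mathfrak{g}^*$ to $G\cdot \mathfrak{p} = \mathfrak{g}$. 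I would verify the latter by taking any $x\in \mathfrak{g}$ with Jordan decomposition $x=s+n$, conjugating $s$ into $\mathfrak{h}\subset \mathfrak{l}_P\subset \mathfrak{p}$, and then conjugating the nilpotent $n$ within the reductive centralizer $Z_G(s)$ (which fixes $s$) into $\mathfrak{b}\cap Z_\mathfrak{g}(s)\subset \mathfrak{b}\subset \mathfrak{p}$. This places $x$ in $\mathfrak{p}$ up to $G$-conjugation, establishing bigness of $\zeta$ and completing the proof that $\mathbb{P} F^*$ is weak Fano.
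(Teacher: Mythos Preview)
Your argument is correct, and the overall scaffold (compute $-K_{\mathbb{P} F^*}=r\zeta$, then show $\zeta$ nef and big) coincides with the paper's. The differences are in the execution of two steps.

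For $\det F\cong K_{G/P}$ you pass through the quotient $G\times_P\mathfrak{l}_P^*$ and argue that its determinant is trivial. The paper uses instead the exact sequence $0\to T^*_{G/P}\to G/P\times\mathfrak{g}\to F^*\to 0$ (coming from $0\to\mathfrak{u}_P\to\mathfrak{g}\to\mathfrak{g}/\mathfrak{u}_P\to 0$), which gives $c_1(F^*)=-K_{G/P}$ in one stroke and simultaneously exhibits $F^*$ as globally generated. Your route works, but the paper's is shorter and avoids the side argument about the $L_P$-action on $\det\mathfrak{l}_P$.

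The genuinely different step is bigness. You argue geometrically: the morphism $\mathbb{P} F^*\to\mathbb{P}(\mathfrak{g})$ induced by $\zeta$ is the projectivized moment map, the dimensions agree, and dominance follows from $G\cdot\mathfrak{p}=\mathfrak{g}$ (every element of $\mathfrak{g}$ lies in a Borel, hence in a conjugate of $\mathfrak{p}$). The paper instead computes the top self-intersection of $\zeta$ directly: from the same exact sequence one reads off $s(F)=c(T_{G/P})$, so $\zeta^{\dim\mathbb{P} F^*}=s_{\operatorname{top}}(F)=c_{\operatorname{top}}(T_{G/P})=\chi_{\operatorname{top}}(G/P)>0$. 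Your argument is more Lie-theoretic and makes the geometric content of bigness transparent; the paper's Segre-class computation is slicker and yields the pleasant identification of the degree with the Euler number of $G/P$.
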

   
   \begin{proof}
   	Consider the following exact sequence of $P$-modules
   	\[
   	0 \longrightarrow \fu_P \longrightarrow \fg \longrightarrow \fg / \fu_P \longrightarrow 0.
   	\]
   	Note that $\fg / \fu_P \cong (\fu_P^\perp)^*$ and $\fu_P \cong (\fg / \fp)^*$, so we have the following exact sequence of vector bundles over $G / P$:
   	\begin{equation} \label{e.exact}
   		0 \to T^*_{G/P} \to G/P \times \fg \to F^* \to 0.
   	\end{equation}
   	Being a quotient of a trivial bundle, the vector bundle $F^*$ is globally generated. Moreover, we have $c_1(F^*) = -c_1(T^*(G / P)) = -K_{G / P}$.
   	
   	Let $p \colon \mathbb{P} F^* \to G / P$ be the natural projection. Using the relative Euler sequence, we have $c_1(T_p) = p^* c_1(F) + r \zeta$, where $r$ is the rank of $F$ and $\zeta = c_1(\mathcal{O}_{\mathbb{P} F^*}(1))$.
   	\[
   	-K_{\mathbb{P} F^*} = c_1(T_p) - p^* K_{G / P} = p^* c_1(F) + r \zeta - p^* K_{G / P} = r \zeta.
   	\]
   	Hence $\mathbb{P} F^*$ is weak Fano if and only if $\zeta$ is nef and big. Moreover, since $F^*$ is globally generated, the divisor $\zeta$ is a nef divisor. Hence, to show $\zeta$ is big, we only need to compute its top self-intersection, or equivalently, the top Segre class of $F$.
   	
   	By \cite[Proposition 10.3]{EisenbudHarris2016} and the exact sequence \eqref{e.exact}, we have
   	\[
   	s(F) = \frac{1}{c(F)} = c(T_{G / P}),
   	\]
   	where $s(F)$ is the total Segre class of $F$ and $c(T_{G / P})$ is the total Chern class of $T_{G / P}$. It follows that $s_{\text{top}} (F) = c_{\text{top}}(G / P)$, which is just the topological Euler number of $G / P$, and therefore strictly positive.
   \end{proof}

\begin{example}
	Let $G = \mathrm{SL}_{r+1}$ and let $P$ be the parabolic subgroup which is the stabilizer of a $r$-dimensional linear subspace in $\bK^{r+1}$. Then, according to \cite[Example 3]{ArzhantsevTimashev2005}, we have:
	\[
	G/U_P \cong \{f \in \mathrm{Hom}(\bK^r, \bK^{r+1}) \mid f \text{ is injective}\} \subset \mathrm{Hom}(\bK^r, \bK^{r+1}).
	\]
	Note that the complement has codimension two, hence $\overline{G/U_P} = \mathrm{Hom}(\bK^r, \bK^{r+1})$. It follows that $T^*(G/U_P) \subset T^*\mathrm{Hom}(\bK^r, \bK^{r+1})$ has a complement of codimension two, from which we deduce  $\overline{T^*(G/U_P)} = T^*\mathrm{Hom}(\bK^r, \bK^{r+1})$.
\end{example}

\begin{example}
	Let $G = \mathrm{Sp}_4$ and $P$ be the stabilizer of a Lagrangian plane in $\bK^4$. By \cite[Example 4]{ArzhantsevTimashev2005}, the affine closure of $G/U_P$ is isomorphic to a $7$-dimensional hyperquadric cone $\widehat{\bQ}^7 \subset \bA^8$. Since $G/U_P \subset \widehat{\bQ}^7$ has a small boundary, we have 
    \[
    \bK[T^*(G/U_P)] = \bK[T^*(\widehat{\bQ}^7 \setminus \{0\})],
    \]
    which implies that $\overline{T^*(G/U_P)}$ is isomorphic to the minimal nilpotent orbit closure in $\mathfrak{so}_{10}$ by Example \ref{e.quadric}.
\end{example}
	
	\bibliographystyle{alpha}
	\bibliography{UHKI}
\end{document}